\newtheorem{theorem}{Theorem}
\newtheorem{lemma}[theorem]{Lemma}
\newtheorem{proposition}[theorem]{Proposition}
\newtheorem{corollary}[theorem]{Corollary}
\theoremstyle{definition}
\newtheorem{definition}[theorem]{Definition}
\theoremstyle{remark}
\newcommand{\ad}{\mathrm{ad}}
\newcommand{\bC}{\mathbb{C}}
\newcommand{\bR}{\mathbb{R}}
\newcommand{\bZ}{\mathbb{Z}}
\newcommand{\bH}{\mathbb{H}}
\newcommand{\cs}{\mathfrak{cs}}
\newcommand{\Cot}{\mathfrak{Cott}}
\newcommand{\cott}{\mathfrak{cott}}
\newcommand{\CS}{\mathfrak{CS}}
\newcommand{\cP}{\mathcal{P}}
\newcommand{\cV}{\mathcal{V}}
\newcommand{\cZ}{\mathcal{Z}}
\newcommand{\dCS}{\overset{\bm{.}}{\CS}}
\newcommand{\dg}{\dot{g}}
\newcommand{\gl}{\mathrm{gl}}
\newcommand{\II}{\mathrm{I\! I}}
\newcommand{\mfa}{\mathfrak{a}}
\newcommand{\omc}{\omega_{\mathrm{MC}}}
\newcommand{\pt}{\partial_t}
\newcommand{\Psldc}{\mathrm{PSL}(2,\bC)}
\newcommand{\Ric}{\mathfrak{Ric}}
\newcommand{\scal}{\mathfrak{scal}}
\newcommand{\Sch}{\mathfrak{Sch}}
\newcommand{\sign}{\mathrm{sign}}
\newcommand{\signature}{\mathrm{signature}}
\newcommand{\Sot}{\mathrm{SO}(3)}
\newcommand{\Sud}{\mathrm{SU}(2)}
\newcommand{\sot}{\mathrm{so}(3)}
\newcommand{\sym}{\mathrm{sym}}
\newcommand{\tr}{\mathrm{tr}}
\newcommand{\vol}{\mathrm{vol}}
\begin{document}
\title[Cotton tensor and Chern-Simons invariants]{The Cotton tensor and Chern-Simons invariants 
in dimension $3$: an introduction}
\author{Sergiu Moroianu}
\thanks{Partially supported by the CNCS project PN-II-RU-TE-2012-3-0492.}
\address{Institutul de Matematic\u{a} al Academiei Rom\^{a}ne\\
P.O. Box 1-764\\RO-014700
Bu\-cha\-rest, Romania}
\email{moroianu@alum.mit.edu}
\date{\today}

\maketitle

\section{Motivation}

Let $(M,g)$ be an oriented Riemannian $3$-manifold. It is natural to ask if, 
like it happens in dimension $2$, the metric $g$ is locally conformally flat.
There exists an obstruction to local conformal flatness in dimension $3$, discovered by
\'Emile Cotton \cite{cotton} in 1899. This obstruction is a symmetric, traceless,
conformally covariant, and divergence-free $2$-tensor on $M$. In 1974
it was further unveiled by Chern and Simons \cite{cs} that the Cotton tensor is 
\emph{variational}: if $M$ is compact, there exists a $\bR/\bZ$-valued function 
on the space of Riemannian metrics, the Chern-Simons invariant,
whose gradient at a given metric $g$ is the Cotton tensor $\Cot(g)$.

In this survey paper we give a short and self-contained introduction to
the theory of Chern-Simons invariants for three-dimensional Riemannian manifolds.
As consequences, we will prove in a conceptual way some of the important
properties of $\Cot$.
Our approach relies on the paper by Chern and Simons \cite{cs},
with modern notation and focusing on the dimension $3$.
We use Besse's book \cite{besse} as reference for standard formulas in Riemannian geometry. 

Chern-Simons invariants are defined here with respect to some Riemannian metric, 
but they actually depend on just a connection. Moreover they can be defined 
in arbitrary dimensions, while we have limited ourselves
to dimension $3$. The main focus of interest regarding Chern-Simons invariants 
shifted in recent years towards invariants of smooth
manifolds, independent of any background metric, obtained by averaging Chern-Simons
invariants with respect to a (mathematically non-rigorous)
measure over an infinite-dimensional space of connections. Such an invariant is called a 
``topological quantum field theory'' and currently plays  a prominent role in mathematical physics. 
Witten's foundational paper \cite{w} highlighted a link between this Chern-Simons TQFT 
and the Jones polynomial of knots. We do not attempt here any forray into these fancy fields.

Since on one hand a comprehensive list of references would dwarf
the size of the paper, and since on the other hand our text 
is self-contained except for the last section, we have kept the references to a minimum, 
hoping that the numerous authors having significant contributions
to the subject will not feel slighted. We have limited our 
ambitions to understanding Riemannian Chern-Simons invariants of $3$-manifolds, 
in the goal of offering the reader a firm, albeit tiny, first foothold into Chern-Simons realm. 
The Cotton tensor comes as a reward for our self-limitation. 
Evidently, no claim of originality is made below other
than the presentation, itself strongly influenced by \cite{CS}. 

These notes are accessible to readers familiar with the basic objects of Riemannian 
geometry: differential forms,
Levi-Civita connection, curvature, Ricci tensor, scalar curvature.
They formed the topic of a concentrated doctoral course at the University 
of Bucharest in March 2014. I am grateful to Sorin D\u asc\u alescu for the invitation.

\subsection*{Acknowledgments}
I have learned about Chern-Simons invariants together 
with Colin Guillarmou, during our joint project \cite{CS} on the Chern-Simons bundle 
over the Teichm\"uller space. I thankfully acknowledge Colin's influence 
over my understanding of the subject.

\section{Definitions and background}

Throughout the paper, $M$ is a Riemannian manifold of dimension $3$, and $g$ its metric. 
Let 
\[\nabla:C^\infty(M,TM)\to C^\infty(M,\Lambda^1 M\otimes TM)\] 
denote the Levi-Civita connection, and 
\[
d^\nabla:  C^\infty(M,\Lambda^k M\otimes TM) \longrightarrow C^\infty(M,\Lambda^{k+1} M\otimes TM)
\]
its extension to $k$-forms twisted by vector fields. The square of this operator equals, in every degree $k$, 
the curvature operator $R$, viewed as an endomorphism-valued $2$-form:
\[
(d^\nabla)^2= R\in C^\infty(M,\Lambda^2 M\otimes\mathrm{End}(TM)).
\]
The second Bianchi identity is an immediate consequence:
\begin{equation}\label{dnc}
d^\nabla(R)=[d^\nabla, (d^\nabla)^2]=0.
\end{equation}
The \emph{Ricci tensor}
is the contraction on positions $\{1,4\}$ of $R$ viewed as a $(3,1)$ tensor:
\[
\Ric(U,V)=\tr\left[X\mapsto R_{XU}V \right].
\]
The \emph{scalar curvature} $\scal$ is the trace of $\Ric$ with respect to $g$.

Let $(d^\nabla)^*$ denote the formal adjoint of $d^\nabla$. This operator restricted to
symmetric $2$-tensors is sometimes called the \emph{divergence} operator.
By twice contracting the $(4,1)$ tensor from \eqref{dnc} on positions $\{3,4\}$ and then $\{1,5\}$,
we obtain
\begin{equation}\label{Pdivf}
(d^\nabla)^* \left(\Ric-\tfrac{\scal}{2}g\right)=0.
\end{equation}

\begin{definition}
The \emph{Schouten tensor} of the metric $g$ is essentially the Ricci tensor, namely
\[\Sch:=\Ric-\tfrac{\scal}{4}g.\] 
\end{definition}
Notice that since the dimension of $M$ is $3$, $\tr(\Sch)=\frac{\scal}{4}$. Also from \eqref{Pdivf},
$\Sch$ is not divergence-free unless $\scal$ is constant.

\begin{definition}
The \emph{Cotton form} of $g$ is $\cott:=d^\nabla \Sch\in C^\infty(M,\Lambda^2 M\otimes TM)$. 
Assuming $M$ to be oriented, the 
\emph{Cotton tensor} of $g$ is the bilinear form
\[\Cot:=*d^\nabla \Sch\in C^\infty(M,\Lambda^1 M\otimes TM),\]
where $*$ is the Hodge operator transforming $2$-forms into $1$-forms on $M$. 
\end{definition}
Recall that the Hodge operator in dimension $3$ is defined in terms of an oriented orthonormal frame
$S_1,S_2,S_3$ as follows:
\begin{align}\label{hoop}
*1=S^1\wedge S^2\wedge S^3,&& *S^1=S^2\wedge S^3,&& *S^2=S^3\wedge S^1,&& *S^3=S^1\wedge S^2
\end{align}
and satisfies $*^2=\mathrm{Id}_{\Lambda^*M}$.

\section{Chern-Simons forms of degree $3$}

Let $M$ be a smooth manifold of any dimension, $m\geq 1$ a natural number and $\theta$ a $1$-form
on $M$ with values in $\gl(m,\bR)$. The Chern-Simons form associated
to $\theta$ is defined by
\[
\cs(\theta):= \tr\left(\theta\wedge d\theta+\tfrac23 \theta\wedge\theta\wedge\theta\right)\in\Lambda^3(M).
\]
\begin{lemma}\label{varcs}
Let $\{\theta_t\}_{t\in\bR}$ be a smooth family of forms and denote by $\dot\theta:=\frac{d\theta_t}{dt}|_{t=0}$ 
its variation in $t=0$. Then 
\[\tfrac{d}{dt}\cs(\theta_t)_{|t=0}= d\tr(\dot\theta\wedge\theta)+2\tr(\dot\theta\wedge(d\theta+\theta\wedge\theta)).\]
\end{lemma}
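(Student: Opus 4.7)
The plan is a direct computation: differentiate $\cs(\theta_t)$ term by term and reassemble using the graded cyclicity of the matrix trace together with the Leibniz rule for the de Rham differential. The only tool beyond routine expansion is graded commutativity under the trace, namely $\tr(\alpha\wedge\beta)=(-1)^{ab}\tr(\beta\wedge\alpha)$ for $\gl(m,\bR)$-valued forms $\alpha,\beta$ of degrees $a,b$.

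First I would write out the derivative in full:
\[
\tfrac{d}{dt}\cs(\theta_t)_{|t=0} = \tr(\dot\theta\wedge d\theta)+\tr(\theta\wedge d\dot\theta)+\tfrac{2}{3}\bigl[\tr(\dot\theta\wedge\theta\wedge\theta)+\tr(\theta\wedge\dot\theta\wedge\theta)+\tr(\theta\wedge\theta\wedge\dot\theta)\bigr].
\]
For the cubic piece, graded cyclicity with all factors of degree $1$ (so the sign is $(-1)^{1\cdot 2}=+1$) forces
\[
\tr(\theta\wedge\dot\theta\wedge\theta) = \tr(\dot\theta\wedge\theta\wedge\theta) = \tr(\theta\wedge\theta\wedge\dot\theta),
\]
so the three cubic terms collapse into $3\tr(\dot\theta\wedge\theta\wedge\theta)$, and the prefactor $\tfrac{2}{3}$ leaves $2\tr(\dot\theta\wedge\theta\wedge\theta)$.

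To handle the remaining term $\tr(\theta\wedge d\dot\theta)$, I would integrate by parts. The graded Leibniz rule for $d$ applied to a product of two $1$-forms gives
\[
d\tr(\theta\wedge\dot\theta) = \tr(d\theta\wedge\dot\theta)-\tr(\theta\wedge d\dot\theta).
\]
Two further uses of graded cyclicity, namely $\tr(d\theta\wedge\dot\theta)=\tr(\dot\theta\wedge d\theta)$ (degrees $2$ and $1$) and $\tr(\theta\wedge\dot\theta)=-\tr(\dot\theta\wedge\theta)$ (degrees $1$ and $1$), rearrange this into
\[
\tr(\theta\wedge d\dot\theta) = \tr(\dot\theta\wedge d\theta)+d\tr(\dot\theta\wedge\theta).
\]
Substituting everything back yields exactly the claimed identity.

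I do not expect any genuine obstacle; the only step demanding attention is the careful bookkeeping of the signs supplied by graded cyclicity. Getting these signs right is what makes the three cubic terms add constructively rather than partially cancel, and what produces the plus sign in $d\theta+\theta\wedge\theta$ inside the final trace — a combination one recognises as the curvature of the connection $\theta$, which is presumably the structural reason the formula takes this clean form.
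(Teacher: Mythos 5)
Your proof is correct and takes essentially the same route as the paper: differentiate under the trace, collapse the three cubic terms via graded cyclicity (sign $(-1)^{1\cdot 2}=+1$), and integrate $\tr(\theta\wedge d\dot\theta)$ by parts using the Leibniz rule for $d$ of a trace. All signs check out, and the bookkeeping matches the paper's (slightly more condensed) computation exactly.
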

\begin{proof}
For matrix-valued forms $\alpha,\beta\in\Lambda^*(M)\otimes \gl(m,\bR)$ we have the trace identity
\[\tr(\alpha\wedge\beta)=(-1)^{\deg\alpha\cdot\deg\beta}\tr(\beta\wedge\alpha)\]
although of course the matrix-valued forms $\alpha$ and $\beta$ need not commute.
We also use the obvious rule for taking the exterior differential of a trace, namely
\[d\tr(\alpha\wedge\beta)=\tr(d\alpha\wedge \beta+(-1)^{\deg(\alpha)}\alpha\wedge d\beta).\]
We compute by differentiating under the trace by the Leibnitz rule and using the above trace identity:
\begin{align*}
\tfrac{d}{dt}\cs(\theta_t)_{|t=0}={}& \tr(\dot\theta\wedge d\theta + \theta\wedge d\dot\theta + 2\dot\theta\wedge\theta\wedge\theta)\\
={}& \tr(d\dot\theta\wedge \theta -\dot\theta\wedge d\theta + 2\dot\theta\wedge d\theta+2\dot\theta\wedge\theta\wedge\theta)
\end{align*}
which gives the desired formula.
\end{proof}

Set $\Omega:=d\theta+\theta\wedge\theta\in\Lambda^2(M)\otimes \gl(m,\bR)$. Lemma \ref{varcs} can be rewritten
\[
\tfrac{d}{dt}\cs(\theta_t)_{|t=0}= d\tr(\dot\theta\wedge\theta)+2\tr(\dot\theta\wedge\Omega).
\]

\begin{lemma}\label{lem4}
The exterior derivative of $\cs(\omega)$ is 
\[d\cs(\theta)= \tr(\Omega\wedge\Omega).\]
\end{lemma}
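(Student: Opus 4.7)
The plan is to compute $d\cs(\theta)$ directly by applying the Leibniz rule for the exterior differential of a trace of matrix-valued forms, together with the graded cyclicity of the trace, both of which were recorded in the proof of Lemma \ref{varcs}. I will then verify that the resulting expression coincides with the expansion of $\tr(\Omega\wedge\Omega)$.

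First I handle the quadratic piece: since $d\theta$ has degree $2$, the Leibniz rule gives
\[
d\tr(\theta\wedge d\theta)=\tr(d\theta\wedge d\theta)-\tr(\theta\wedge d^2\theta)=\tr(d\theta\wedge d\theta),
\]
using $d^2=0$. For the cubic piece, expand
\[
d(\theta\wedge\theta\wedge\theta)=d\theta\wedge\theta\wedge\theta-\theta\wedge d\theta\wedge\theta+\theta\wedge\theta\wedge d\theta.
\]
Applying the cyclic trace identity with the appropriate signs (moving the degree-$2$ form $d\theta$ past one or two degree-$1$ factors $\theta$ yields a sign of $(-1)^{2\cdot 1}=+1$), all three summands in the trace coincide, so
\[
d\,\tr(\theta\wedge\theta\wedge\theta)=3\,\tr(d\theta\wedge\theta\wedge\theta).
\]
Combining, I obtain
\[
d\cs(\theta)=\tr(d\theta\wedge d\theta)+2\,\tr(d\theta\wedge\theta\wedge\theta).
\]

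It remains to compare this with $\tr(\Omega\wedge\Omega)$. Expanding $(d\theta+\theta\wedge\theta)^2$ yields four terms; the cyclic trace identity applied to $\theta\wedge\theta\wedge d\theta$ gives $\tr(\theta\wedge\theta\wedge d\theta)=\tr(d\theta\wedge\theta\wedge\theta)$, while for the quartic term, writing $\tr(\theta\wedge\theta\wedge\theta\wedge\theta)=(-1)^{1\cdot 3}\tr(\theta\wedge\theta\wedge\theta\wedge\theta)$ forces $\tr(\theta^{\wedge 4})=0$. Therefore
\[
\tr(\Omega\wedge\Omega)=\tr(d\theta\wedge d\theta)+2\,\tr(d\theta\wedge\theta\wedge\theta),
\]
which matches the expression for $d\cs(\theta)$ computed above.

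The only real subtlety in this plan is the careful bookkeeping of signs in the graded trace identity, particularly when one of the factors has degree $2$ or $3$. Otherwise the argument is a mechanical application of Leibniz's rule and the two properties of the matrix trace already established in the proof of Lemma \ref{varcs}.
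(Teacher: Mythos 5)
Your proof is correct and follows essentially the same route as the paper: apply the Leibniz rule and the graded cyclic trace identity to get $d\cs(\theta)=\tr(d\theta\wedge d\theta)+2\tr(d\theta\wedge\theta\wedge\theta)$, then match this with the expansion of $\tr(\Omega\wedge\Omega)$ using $\tr(\theta^{\wedge 4})=0$; the paper merely compresses all of this into one line. One small caveat on your parenthetical sign remark: for matrix-valued forms only \emph{cyclic} permutations are legitimate under the trace, and the relevant cyclic move for the middle term is $\tr(\theta\wedge(d\theta\wedge\theta))=(-1)^{1\cdot 3}\tr(d\theta\wedge\theta\wedge\theta)$, so the cyclic sign is $-1$, which then cancels against the Leibniz sign $-1$ to produce the $+1$ you need; your stated sign $(-1)^{2\cdot 1}=+1$ is not the right justification, although the final count of $3\tr(d\theta\wedge\theta\wedge\theta)$ is correct.
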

\begin{proof}
We have by the trace identity
\begin{align*}
d\cs(\theta)= {}&\tr(d\theta\wedge d\theta +2 d\theta\wedge \theta\wedge\theta)= \tr\left( (d\theta+\theta\wedge\theta)^2\right)
\end{align*}
where in the last equality we used $\tr(\theta^4)=0$ by anti-symmetry.
\end{proof}

\section{The Chern-Simons invariant of a Riemannian metric}

From now on $M$ is an oriented compact $3$-manifold without boundary.
The tangent bundle to such a manifold
is trivial by a classical result of Stiefel \cite[Satz 21]{stiefel} 
(in fact, for this result one does not even need $M$ to be compact, 
but we do not use here this more general statement). 
Choose a global orthonormal frame $S=(S_1,S_2,S_3)$, obtained for instance by applying the Gram-Schmidt procedure to
some arbitrary smooth frame. In this frame, the Levi-Civita connection
can be expressed as
\[\nabla = d+\omega\]
where the $\sot$-valued connection $1$-form $\omega$ is given by 
$\omega_{ij}=\langle \nabla S_j,S_i\rangle $. 

\begin{definition}
The Chern-Simons 
$3$-form $\cs(M,g,S)$ is defined by
\[\cs(M,g,S):= \cs(\omega)=\tr(\omega\wedge d\omega+\tfrac23 \omega\wedge\omega\wedge\omega)\in\Lambda^3(M),\]
where the trace is the usual trace on $3\times 3$ matrices.  
The Chern-Simons invariant of $(M,g)$ with respect to the frame S is 
\[ \CS(M,g, S):= -\tfrac{1}{16\pi^2} \int_M \cs(M,g,S)\in \bR.\]
\end{definition}

The triviality of the vector bundle $TM$ entails the vanishing of its
``primary'' characteristic classes (Stiefel-Whitney, Pontriagin and Euler). 
At the same time, this triviality implies the existence of a global connection $1$-form, 
the essential ingredient
in the construction of the Chern-Simons invariant. Since the invariant 
exists under the condition that all primary characteristic classes vanish, it is viewed as a kind of 
``secondary'' characteristic class, dependent on the metric, and \emph{a priori} also 
on the choice of the frame. 

\begin{proposition}
Let $S,S'$ be two orthonormal frames on $M$ linked by some $\Sot$-valued function 
$\mfa:M\to\Sot$, i.e., $S'=S\mfa$. Then 
\[\CS(M,g, S)-\CS(M,g,S')\in\bZ.\]
\end{proposition}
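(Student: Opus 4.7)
The strategy is the classical gauge-transformation argument. From the definition $\omega_{ij}=\langle\nabla S_j,S_i\rangle$ and the substitution $S'_j = \sum_k S_k\mfa_{kj}$, a short computation (apply $\nabla$ to $S'_j$, use the Leibniz rule, and match coefficients) yields the transformation law $\omega' = \mfa^{-1}\omega\mfa + \mfa^{-1}d\mfa$. Set $\theta := \mfa^{-1}d\mfa$, which satisfies the Maurer-Cartan equation $d\theta+\theta\wedge\theta = 0$.

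Expanding $\cs(\omega')$ by multilinearity and using the trace cyclicity identity from the proof of Lemma \ref{varcs}, the terms purely in $\mfa^{-1}\omega\mfa$ collapse to $\cs(\omega)$, while the cross terms and the pure-$\theta$ terms simplify via Maurer-Cartan to an exact form plus a multiple of $\tr(\theta\wedge\theta\wedge\theta)$; more precisely,
\[
\cs(\omega') - \cs(\omega) = -\tfrac{1}{3}\tr(\theta\wedge\theta\wedge\theta) + d\beta
\]
for some explicit $\beta \in \Lambda^2(M)$ depending on $\omega$ and $\mfa$. Since $M$ is closed, Stokes kills the $d\beta$ term upon integration, and we obtain
\[
\CS(M,g,S) - \CS(M,g,S') = -\tfrac{1}{48\pi^2}\int_M \mfa^*\tr(\omc\wedge\omc\wedge\omc),
\]
where $\omc$ denotes the Maurer-Cartan form on $\Sot$.

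It remains to establish the integrality of the right-hand side. The 3-form $\tr(\omc\wedge\omc\wedge\omc)$ is bi-invariant, hence closed and top-degree on the oriented 3-manifold $\Sot$. By the degree formula for maps between closed oriented 3-manifolds,
\[
\int_M \mfa^*\tr(\omc\wedge\omc\wedge\omc) = \deg(\mfa)\cdot\int_{\Sot}\tr(\omc\wedge\omc\wedge\omc),
\]
so the claim reduces to showing that $\tfrac{1}{48\pi^2}\int_{\Sot}\tr(\omc\wedge\omc\wedge\omc)$ is an integer. This can be verified by a direct calculation in a left-invariant orthonormal basis of $\sot$, most conveniently by lifting through the double cover $\Sud\to\Sot$ and computing in the Pauli basis of $\mathfrak{su}(2)$. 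The main obstacle is the bookkeeping in the two nontrivial computations: extracting the precise coefficient $-\tfrac{1}{3}$ in the gauge-change formula for $\cs$, and checking that the Maurer-Cartan integral on $\Sot$ divided by $48\pi^2$ lands in $\bZ$. The prefactor $\tfrac{1}{16\pi^2}$ in the definition of $\CS$ is chosen precisely so that these two normalizations conspire to produce an integer.
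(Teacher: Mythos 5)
Your proposal is correct and follows essentially the same route as the paper: the gauge transformation law $\omega'=\mfa^{-1}\omega\mfa+\mfa^{-1}d\mfa$, the identity $\cs(\omega')-\cs(\omega)=d\beta-\tfrac13\tr((\mfa^{-1}d\mfa)^3)$, Stokes, the identification of $\mfa^{-1}d\mfa$ with $\mfa^*\omc$, and the degree argument reducing everything to $\int_{\Sot}\tr(\omc^3)=\pm 48\pi^2$, which the paper evaluates directly in the basis $I=\ad_i$, $J=\ad_j$, $K=\ad_k$ (its Lemma on the Maurer--Cartan form) rather than by lifting to $\Sud$.
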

\begin{proof}
The connection form changes by
\[\omega'=\mfa^{-1}\omega \mfa +\mfa^{-1} d\mfa.\]
By a simple computation, the Chern-Simons form of $\omega'$ is given by
\[\cs(\omega')=\cs(\omega)+d\tr(\mfa^{-1}\omega\wedge d\mfa)-\tfrac13 \tr((\mfa^{-1}d\mfa)^3).\]
The form $\mfa^{-1}d\mfa$ equals the pull-back via the map $\mfa:M\to \Sot$ of the Maurer-Cartan 
$1$-form $\omc$ that we recall in Lemma \ref{lemmc} below.
By that lemma, the integral on $\Sot$ of the $3$-form $\tr((\omega_{MC})^3)$ equals $48\pi^2$, thus
$\frac{1}{48\pi^2}\tr((\omc)^3)$ is a generator of $H^3(\Sot,\bZ)$. It follows that the integral 
on $M$ of $\frac{1}{48\pi^2}\tr((\mfa^{-1}d\mfa)^3)$ is
an integer, equal to the degree of the map $\mfa$. 
\end{proof}

We see that the constant of normalization was chosen so that for different choices of $S$,
the Chern-Simons integral changes by some integer. In other words, the invariant 
is well-defined modulo $\bZ$ independently of $S$, and will be denoted $\CS(M,g)\in\bR/ \bZ$.

\section{An example}\label{exampl}

Let us compute the Chern-Simons invariant of the Lie group $\Sot$. 
Let $\bH$ be the quaternion algebra. The group of quaternions of length $1$ (which is just
the sphere $S^3\subset \bH$) 
acts on $\bH$ via \emph{right} multiplication,
preserving at the same time the standard Hermitian metric and the structure of complex vector space 
given by left multiplication with complex numbers. We get in this way a unitary representation of $S^3$.
By compactness, the representation lies in $\Sud$, and since it is clearly faithful, it provides 
a Lie group isomorphism $\Sud\to S^3$.

Conjugation by quaternions of length $1$ is a real representation of $S^3$ (which we henceforth identify with
$\Sud$). It acts orthogonally on $\bH=\bR^4$ and preserves the real line, 
thus it also preserves its orthogonal complement $\bH'=\bR^3$, the $3$-dimensional space
of purely imaginary quaternions. By connectedness, it must take values in $\Sot$.
The kernel of this representation is the intersection of the center of $\bH$ with $S^3$, thus it consists
of $\{\pm 1\}$. Moreover the representation is surjective since it contains every reflection around an axis in $\bH'$.

We have obtained a $2:1$ covering
$\Sud\to\Sot$ of Lie groups, with deck group $\{\pm 1\}$ acting isometrically. 
Endow $\Sot$ with the metric $g$ induced from this covering.

Let 
\begin{align}\label{IJK}
I:=\ad_i=\begin{bmatrix}
     0&0&0\\0&0&-2\\0&2&0
    \end{bmatrix}, &&
J:=\ad_j=\begin{bmatrix}
     0&0&2\\0&0&0\\-2&0&0
    \end{bmatrix},&&
K:=\ad_k=\begin{bmatrix}
     0&-2&0\\2&0&0\\0&0&0
    \end{bmatrix}
\end{align}
be the image in $\sot$ of 
the standard orthonormal basis $\{i,j,k\}$ in $T_1 S^3$. We transport these vectors on $\Sot$ by left translations. Their Lie bracket is given by:
\begin{align*}
[I,J]=2K,&&[J,K]=2I,&& [K,I]=2J.
\end{align*}
From the Koszul formula \eqref{kos} and left-invariance, we deduce
\begin{align*}
\nabla_IJ=K,&&\nabla_JK=I,&& \nabla_KI=J.
\end{align*}
The connexion $1$-form $\omega$ is thus given by
\begin{align*}
\omega(I)=\begin{bmatrix}
     0&0&0\\0&0&-1\\0&1&0
    \end{bmatrix}, &&
\omega(J)=\begin{bmatrix}
     0&0&1\\0&0&0\\-1&0&0
    \end{bmatrix},&&
\omega(K)=\begin{bmatrix}
     0&-1&0\\1&0&0\\0&0&0
    \end{bmatrix}
\end{align*}
implying that $\tr(\omega^3)=-6 d\vol_g$. Since the coefficients of $\omega$ are constant,
it follows from the Cartan formula
\begin{align*}
d\omega(I,J)=-2\omega(K),&&d\omega(J,K)=-2\omega(I),&& d\omega(K,I)=-2\omega(J).
\end{align*}
Since $\tr\left(\omega(I)^2\right)=\tr\left(\omega(I)^2\right)=\tr\left(\omega(I)^2\right)=-2$,
it follows 
 $\tr(\omega\wedge d\omega)=12 d\vol_g$. In conclusion, 
 \[\cs(\omega)=8d\vol_g.\]
 
The volume of the sphere $S^3$ with its standard metric is obtained as follows:
\[
\vol(S^3)= \int_{-\pi/2}^{\pi/2} \cos^2(t) \vol(S^2) dt= 4\pi \cdot \tfrac{\pi}{2}=2\pi^2,
\]
so $\vol(\Sot,g)=\pi^2$. We deduce
\begin{equation}\label{cssot}
\CS(\Sot,g)=-\tfrac{1}{2}.
\end{equation}

\begin{corollary}\label{csst}
The Chern-Simons invariant of $S^3$ with its standard metric vanishes in $\bR/\bZ$.
\end{corollary}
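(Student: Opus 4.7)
The plan is to leverage the $2{:}1$ covering $p\colon \Sud\to\Sot$ constructed in the example. By the very definition of the metric $g$ on $\Sot$, this covering is a local isometry, and the identification $\Sud\cong S^3$ (with its round metric) makes $p$ a local isometry between $S^3$ and $(\Sot,g)$.

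First, I would choose the global orthonormal frame $S=(I,J,K)/2$ on $\Sot$ used above and pull it back via $p$ to obtain a global orthonormal frame $\tilde S:=p^*S$ on $S^3$. Since $p$ is a local isometry, the Levi-Civita connection on $S^3$ is the pullback of that on $\Sot$, and the local connection $1$-form in the frame $\tilde S$ is exactly $\tilde\omega=p^*\omega$. Consequently, because $\cs$ is constructed by algebraic wedge and trace operations on $\omega$ and $d\omega$, the Chern-Simons $3$-form is natural under pullback:
\[
\cs(S^3,g_{S^3},\tilde S)=\cs(\tilde\omega)=p^*\cs(\omega)=p^*\cs(\Sot,g,S).
\]

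Next I would integrate. Since $p$ is a $2{:}1$ Riemannian covering,
\[
\int_{S^3} p^*\cs(\omega)=2\int_{\Sot}\cs(\omega),
\]
so
\[
\CS(S^3,g_{S^3},\tilde S)=-\tfrac{1}{16\pi^2}\int_{S^3}p^*\cs(\omega)=2\,\CS(\Sot,g,S)=2\cdot\left(-\tfrac12\right)=-1,
\]
using the value \eqref{cssot} computed in the example.

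Finally, $-1\equiv 0\pmod{\bZ}$, and since the previous proposition shows $\CS$ is well-defined in $\bR/\bZ$ independently of the frame, we conclude $\CS(S^3,g_{S^3})=0$ in $\bR/\bZ$. There is no real obstacle here beyond verifying naturality of $\cs$ under local isometries and keeping track of the factor of $2$ coming from the degree of the cover; the essential arithmetic miracle is simply that $2\cdot(-\tfrac12)\in\bZ$.
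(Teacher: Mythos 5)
Your proposal is correct and follows exactly the paper's argument: pull back the Chern-Simons form along the $2{:}1$ locally isometric covering $S^3\to\Sot$, so the integral doubles, giving $\CS(S^3)=2\cdot(-\tfrac12)=-1\equiv 0$. The paper states this in one line; you have merely spelled out the naturality of $\cs$ under the covering, which is the implicit content of that line.
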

\begin{proof}
The $2:1$ covering $S^3\to\Sot$ is an isometry, hence $\int_{S^3}\cs(S^3)=2\int_{\Sot}
\cs(\Sot)$. The result follows from \eqref{cssot}.
\end{proof}

\begin{lemma}\label{lemmc}
Let $\omc$ denote the Maurer-Cartan $1$-form on $\Sot$, namely 
$\omc(X)=X\in\sot$ for every left-invariant vector field $X$. Then 
\[\tr(\omc^3)=-48 d\vol_g.\]
\end{lemma}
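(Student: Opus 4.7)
The strategy is to piggyback on the computation already carried out in Section \ref{exampl}, by observing that the Maurer-Cartan form $\omc$ differs from the Levi-Civita connection form $\omega$ only by the factor $2$.

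First I would note that both $\omega$ and $\omc$ are $\sot$-valued $1$-forms on $\Sot$ which are \emph{left-invariant}. For $\omc$ this holds by definition. For $\omega$ it follows because the metric $g$ is left-invariant by construction, hence so is its Levi-Civita connection, and the frame $S=(I,J,K)$ is left-invariant; combining these, the coefficients $\omega_{ij}=\langle \nabla S_j,S_i\rangle$ are constant on $\Sot$, and the matrix-valued form $\omega$ is left-invariant. Any left-invariant $1$-form is determined by its values on the left-invariant basis $I,J,K$.

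Next I would compare $\omega(I),\omega(J),\omega(K)$ (computed explicitly in Section \ref{exampl}) with $\omc(I)=I$, $\omc(J)=J$, $\omc(K)=K$ as given by \eqref{IJK}. Entry by entry, the matrices differ by a factor of $2$, so $\omc=2\omega$ as $\sot$-valued $1$-forms on $\Sot$.

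The claim then follows immediately: $\tr(\omc^3)=8\tr(\omega^3)=-48\,d\vol_g$, using $\tr(\omega^3)=-6\,d\vol_g$ from Section \ref{exampl}. There is no real obstacle here; the only conceptual point that needs care is the left-invariance of $\omega$, which justifies comparing the two forms on a single left-invariant frame rather than computing $\omc^3$ from scratch.
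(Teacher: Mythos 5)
Your proof is correct, and it takes a different (if closely related) route from the paper's. The paper proves the lemma directly from \eqref{IJK}: it computes $\tr(\omc(I)\omc(J)\omc(K))=\tr(IJK)=-8$ and $\tr(IKJ)=8$, then sums over the six permutations using the graded trace identity to get $\tr(\omc^3)(I,J,K)=3(-8)-3(8)=-48$. You instead observe that $\omc=2\omega$ (the matrices in \eqref{IJK} are exactly twice the values $\omega(I),\omega(J),\omega(K)$ computed in Section \ref{exampl}) and scale the already-stated identity $\tr(\omega^3)=-6\,d\vol_g$ by $2^3=8$. Your route is shorter and makes the pleasant structural point that on $\Sot$ the Levi-Civita connection form in the left-invariant frame is half the Maurer-Cartan form; its only cost is that it leans on the assertion $\tr(\omega^3)=-6\,d\vol_g$, which the paper states in Section \ref{exampl} without spelling out the permutation sum, so the combinatorial work is merely relocated rather than avoided. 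One small simplification: you do not really need left-invariance of $\omega$ to conclude $\omc=2\omega$ --- at each point $I,J,K$ form a basis of the tangent space, so equality of the two $1$-forms on this frame (with constant matrix values) already gives equality everywhere; left-invariance is just the reason those values are constant.
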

\begin{proof}
It follows from \eqref{IJK} that
\begin{align*}
\tr(\omc(I)\omc(J)\omc(K))=\tr(IJK)={}&-8,\\
\tr(\omc(I)\omc(K)\omc(J))=\tr(IKJ)={}&8.
\end{align*}
From the trace identity, the lemma follows.
\end{proof}

\section{Conformal invariance}

One of the striking properties of the Riemannian Chern-Simons invariant is its conformal invariance: the invariant
does not change (modulo $\bZ$) when the metric varies in a fixed conformal class.

\begin{theorem}
Let $(M,g)$ be a closed oriented Riemannian $3$-fold, and $f\in C^\infty(M,\bR)$ an arbitrary conformal factor. Then
\[\CS(M,g)=\CS(M,e^{2f}g).\]
\end{theorem}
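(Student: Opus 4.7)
The plan is to prove the invariance infinitesimally: I will show that if $g_t := e^{2tf}g$ and $S^t := (e^{-tf}S_1, e^{-tf}S_2, e^{-tf}S_3)$ is the orthonormal frame for $g_t$ obtained by rescaling a fixed orthonormal frame $S$ for $g$, then $\CS(M, g_t, S^t)$ is independent of $t$. Evaluating at $t=0$ and $t=1$ and using the fact that $\CS$ is well-defined modulo $\bZ$ independently of the frame (proven in the previous section), this yields $\CS(M,g) = \CS(M, e^{2f}g)$ in $\bR/\bZ$.

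First I would compute the infinitesimal variation $\dot\omega := \frac{d}{dt}\omega_t|_{t=0}$ of the connection $1$-form using the standard conformal change formula for the Levi-Civita connection. A short computation using the definition $(\omega_t)_{ij} = g_t(\nabla^{g_t}S^t_j, S^t_i)$ gives the pleasantly simple expression
$$\dot\omega_{ij} = df(S_j)\,S^i - df(S_i)\,S^j,$$
where $S^1, S^2, S^3$ denotes the coframe dual to $S$. Next, apply Lemma~\ref{varcs} followed by Stokes' theorem on the closed manifold $M$; the exact term disappears, and the problem is reduced to proving
$$\int_M \tr(\dot\omega \wedge \Omega) = 0,$$
where $\Omega$ is the curvature $2$-form of $g$. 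Expanding and using the antisymmetry $\Omega_{ji} = -\Omega_{ij}$ immediately rewrites the integrand as $2\sum_{i,j}df(S_j)\,S^i\wedge \Omega_{ji}$.

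The main step, and the only non-routine point, is to observe that $\sum_i S^i\wedge\Omega_{ji}$ vanishes identically for each fixed $j$. Indeed, writing $\Omega_{ji} = \tfrac{1}{2}R_{jikl}\,S^k\wedge S^l$, this sum is proportional to the total antisymmetrization of $R_{jikl}$ over the three indices $(i,k,l)$, which vanishes because the first Bianchi identity $R_{jikl} + R_{jkli} + R_{jlik} = 0$, combined with the antisymmetry of $R$ in its last two indices, forces that antisymmetrization to be zero. Hence $\tr(\dot\omega \wedge \Omega)$ vanishes \emph{pointwise}, not merely in integral form. The same reasoning applied at every $t\in[0,1]$ — with $g, S$ replaced by $g_t, S^t$, and noting that the analog of the formula for $\dot\omega$ is formally identical because $df$ is metric-independent — shows that $\CS(M, g_t, S^t)$ is constant in $t$, completing the proof. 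The main obstacle is thus a purely algebraic identity on the Riemann tensor; once it is in place, the variational machinery of Lemma~\ref{varcs} does everything else.
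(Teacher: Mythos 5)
Your proposal is correct and follows essentially the same route as the paper: the variation $\dot\omega_{ij}=df(S_j)S^i-df(S_i)S^j$ is exactly the paper's form $\alpha$, the reduction via Lemma \ref{varcs} is identical, and the key pointwise vanishing of $\tr(\dot\omega\wedge\Omega)$ by the first Bianchi identity is precisely the lemma embedded in the paper's proof, followed by the same observation that the variation is $t$-independent so the argument integrates from $0$ to $1$.
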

\begin{proof}
For $t\in\bR$ set $g^t:=e^{2tf}g$, thus in particular $g^0=g$. 
Fix an orthonormal frame $S=(S_1,S_2,S_3)$ on $(M,g)$, and define $S_j^t:=e^{-tf}S_j$, $j=1,2,3$.
Then $(S_1^t,S_2^t,S_3^t)$ form an orthonormal frame on $(M,g^t)$. Starting from the Koszul formula
\begin{equation}\label{kos}\begin{split}
2\langle\nabla_XY,Z\rangle= {}&X\langle Y,Z \rangle +Y\langle X,Z \rangle -Z\langle X,Y\rangle\\
{}& +\langle[X,Y],Z \rangle +\langle[Z,X],Y \rangle 
+\langle[Z,Y],X \rangle,
\end{split}\end{equation}
we deduce that the Levi-Civita connexions of $g^1=e^{2f}g$ and that of $g$ differ by
\[\nabla^1_XY - \nabla_XY = X(f) Y+ Y(f) X - \langle X,Y\rangle_{g}\mathrm{grad}^g(f).\]
for every vector fields $X,Y$. We apply this identity to deduce
\[\langle \nabla^1_X S^1_j,S^1_i\rangle_{g^1} =\langle \nabla_X S_j,S_i\rangle 
+ \langle S_j(f) S_i,X\rangle - \langle S_i(f) S_j,X\rangle.
\]
In other words, if we identify vectors and $1$-forms using the metric $g$, we express the 
connection $1$-form of $\nabla^1$ in the frame $S^1$ as
\[\omega^1 = \omega +\alpha\]
where 
\[\alpha_{ij}(X) = \langle S_j(f) S_i -  S_i(f) S_j,X\rangle.\]

Applying this identity to $g^t$, we get $\omega^t=\omega+t\alpha$. 
From Lemma \ref{varcs} with $\dot{\omega}=\alpha$,
we obtain 
\begin{equation}\label{vcs}
\tfrac{d}{dt}\cs(\omega^t)_{|t=0}= d\tr(\alpha\wedge\omega)+2\tr(\alpha\wedge R)
\end{equation}
where $R\in\Lambda^2(M,\sot)$ is the Riemannian curvature tensor written in the frame 
$S$.
\begin{lemma}
The trace $\tr(\alpha\wedge R)$ vanishes.
\end{lemma}
\begin{proof}
Let $(k,h,l)\in\Sigma(3)^+$ denote even permutations of $\{1,2,3\}$.
We write 
\begin{align*}
\tr(\alpha\wedge R) = d\vol_g \sum_{(k,h,l)\in\Sigma(3)^+} \sum_{i,j=1}^3 \alpha_{ji}(S_k)R_{hlij}.
\end{align*}
But $\alpha_{ji}(S_k)= S_i(f)\delta_{kj} - S_j(f)\delta_{ki}$, hence
\[\tr(\alpha\wedge R)= 
d\vol_g \sum_{(k,h,l)\in\Sigma(3)^+} \left( \sum_{i=1}^3 S_i(f) R_{hlik} - \sum_{j=1}^3 S_j(f) R_{hlkj} \right).
\]
Both terms vanish by the first Bianchi identity (see \cite[Prop.\ 1.85d]{besse}).
\end{proof}
By this lemma, \eqref{vcs} can be rewritten
\[\tfrac{d}{dt}\cs(\omega^t)_{|t=0}= d\tr(\alpha\wedge\omega).\]
Now apply this to $\omega_t$ instead of $\omega$.
We have seen that $\frac{d\omega^t}{dt}=\alpha$ is independent of $t$. 
Therefore the identity
\[
\tfrac{d}{dt}\cs(\omega^t)= d\tr(\alpha\wedge\omega^t)
\]
is valid for all $t$. Moreover, $\tr(\alpha\wedge\alpha)=0$ and so finally we deduce
$\frac{d\cs(\omega^t)}{dt}= d\tr(\alpha\wedge\omega)$. This is independent of $t$, and by integrating from $0$ to $1$
we get $\cs(\omega^1)=\cs(\omega)+d\tr(\alpha\wedge\omega)$. 
The theorem is a direct consequence of Stokes' formula.
\end{proof}

\section{First-order variation of the Chern-Simons invariant}

Recall the definitions of the Cotton form, respectively tensor, on a Riemannian $3$-manifold $(M,g)$:
\begin{align*}
\cott=d^\nabla \Sch,&&\Cot=*\cott.
\end{align*}
Let $L^2$ denote the Hilbert space of square-integrable symmetric $2$-tensors on $M$ with 
respect to the scalar product and the volume form induced by $g$.
\begin{theorem}\label{varcsco}
Let $\Omega:=d\omega+\omega\wedge\omega\in \Lambda^2(M,\sot)$ denote the curvature form of $g$
in the frame $S$. Let $(g^t)_{t\in\bR}$ be a $1$-parameter family of Riemannian metrics, and denote by
$\dg:=\frac{d}{dt} {g^t}_{|t=0}$ its first-order variation.
Then 
\[\dCS:=\tfrac{d}{dt}\CS(M,g^t)_{|t=0}= -\tfrac{1}{8\pi^2}\langle \dg, \Cot\rangle_{L^2}=
-\tfrac{1}{8\pi^2}\int_M \langle \dg,\Cot(g)\rangle_g d\vol_g.  \]
\end{theorem}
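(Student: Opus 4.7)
The plan is to differentiate the Chern--Simons form via Lemma~\ref{varcs} applied to the Levi--Civita connection 1-form in a smoothly varying orthonormal frame, then use Stokes' theorem on the closed manifold to reduce to an integral, which I will identify with the Cotton pairing via the dimension-3 Schouten decomposition of curvature and integration by parts.

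Concretely, I would extend $S$ to a smooth family $S^t$ of $g^t$-orthonormal frames with $S^0=S$---one canonical choice is $S^t:=(A^t)^{-1/2}S$, where $A^t\in\mathrm{End}(TM)$ is the $g$-symmetric positive operator with $g^t=g(A^t\cdot,\cdot)$---and let $\omega^t\in\Lambda^1(M,\sot)$ be the corresponding connection form. Applying Lemma~\ref{varcs} to $\theta_t=\omega^t$ and using that $M$ is closed, the exact piece integrates to zero, leaving
\[\dCS=-\tfrac{1}{8\pi^2}\int_M\tr(\dot\omega\wedge\Omega),\]
where $\Omega=d\omega+\omega\wedge\omega$ is the curvature of $g$ in $S$ and $\dot\omega:=\tfrac{d}{dt}\omega^t_{|t=0}\in\Lambda^1(M,\sot)$.

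To match this with the $L^2$-pairing against $\Cot$, I would use two structural facts. First, a Koszul-type computation extending the one carried out above in the conformal case gives $\dot\omega$ as an explicit first-order linear differential operator in $\dg$, combining a piece in $\nabla\dg$ with purely algebraic terms. Second, since the Weyl tensor vanishes in dimension~3, the Riemann tensor is an algebraic expression in the Schouten tensor via the Kulkarni--Nomizu product: $R_{ijkl}=g_{ik}\Sch_{jl}+g_{jl}\Sch_{ik}-g_{il}\Sch_{jk}-g_{jk}\Sch_{il}$. Substituting these into $\tr(\dot\omega\wedge\Omega)$, the integrand splits into (i) purely algebraic terms linear in $\dg$ and $\Sch$, which vanish pointwise by the first Bianchi identity---exactly as in the lemma used for conformal invariance above---and (ii) terms pairing $\nabla\dg$ with $\Sch$. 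Integration by parts in (ii) transfers the covariant derivative from $\dg$ onto $\Sch$, producing $d^\nabla\Sch=\cott$, and using $*\cott=\Cot$ one identifies the result with $\int_M\langle\dg,\Cot\rangle\,d\vol_g$ up to the expected constant.

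The main obstacle is the explicit index bookkeeping in the previous paragraph. There is a gauge freedom in $\dot\omega$ corresponding to an $\sot$-valued time derivative of the frame extension $S^t$; one must verify that this ``pure gauge'' piece contributes only exact terms to $\tr(\dot\omega\wedge\Omega)$, so that the final answer is intrinsic to $\dg$. Tracking signs, contractions, and normalizations to recover exactly the factor $-\tfrac{1}{8\pi^2}$---with the first Bianchi identity and Stokes' theorem on the closed manifold as the structural tools making everything collapse onto the divergence $d^\nabla\Sch$---is the technical heart of the argument.
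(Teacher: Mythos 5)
Your overall strategy coincides with the paper's in outline: extend $S$ to a family of $g^t$-orthonormal frames, apply Lemma~\ref{varcs} and Stokes to reduce to $\dCS=-\tfrac{1}{8\pi^2}\int_M\tr(\dot\omega\wedge\Omega)$, then convert this into the Cotton pairing using the fact that in dimension $3$ the curvature is determined by the Schouten tensor, plus one integration by parts. The gauge-freedom worry you raise is also easily dispatched: a pure-gauge change $\dot\omega\mapsto\dot\omega+\nabla\dot{\mfa}$ with $\dot{\mfa}\in\sot$ alters the integrand by $d\tr(\dot{\mfa}\,\Omega)$ thanks to the second Bianchi identity $d^\nabla\Omega=0$, so the answer is indeed intrinsic to $\dg$.

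The genuine gap is exactly where you place it: you never produce the formula for $\dot\omega$ in terms of $\dg$, and without it the reduction to $\langle\dg,\Cot\rangle_{L^2}$ is a conjecture about how the bookkeeping will come out, not a proof. The paper's key device is to choose the frame extension not algebraically (as your $(A^t)^{-1/2}S$) but by parallel transport in the $t$-direction for the cylinder metric $dt^2+g^t$ on $\cZ=\bR\times M$; then $\dot\omega_{ij}(X)=\langle R^{\cZ}_{TX}S_j,S_i\rangle$, and the Codazzi--Mainardi equation applied to the slice $\{0\}\times M$, whose Weingarten operator is $W=\tfrac12 g^{-1}\dg$, yields the clean identity $\dot\omega=\tfrac12 d^\nabla\dg$. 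With that in hand the integrand is $\tfrac12\tr(d^\nabla\dg\wedge R)$, a single integration by parts gives $\langle\dg,(d^\nabla)^* *_{34}R\rangle$, and the identity $*_{12}*_{34}R=\Ric-\tfrac{\scal}{2}g$ (the dimension-$3$ avatar of your Kulkarni--Nomizu decomposition) identifies the symmetric part of the result with $*\,d^\nabla\Sch=\Cot$. Note also that your anticipated splitting --- algebraic terms killed pointwise by the first Bianchi identity, ``exactly as in'' the conformal-invariance lemma --- is not what happens: $\tr(\dot\omega\wedge\Omega)$ does not vanish pointwise, and the Bianchi-type input actually needed is the contracted second Bianchi identity $\tr_{13}d^\nabla\Sch=0$, used to show that $*_{12}d^\nabla\Sch$ is symmetric (equivalently, that the trace term $*_{12}d^\nabla(\scal\cdot g)$ is skew and drops against the symmetric tensor $\dg$). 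So the skeleton is right, but the step you defer as ``the technical heart'' is precisely the one requiring the idea.
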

\begin{proof}
We first construct a smooth family $(S^t)_{t\in\bR}$ of global frames on $M$ such that $S^t$ is orthonormal for $g^t$.
For this, consider the metric $g^\cZ:=dt^2+g^t$ on $\cZ:=\bR\times M$, and set $S^t$ to be the parallel transport 
along the segment $[0,t]$ of the frame $S^0:=S$. A direct computation shows that the lines $\bR\times\{p\}$ are geodesics
for $g^\cZ$ for all $p\in M$, thus $S^t$ is indeed an orthonormal frame in the slice $\{t\}\times M$. 

Let $\omega^t$ be the connection $1$-form of $g^t$ in the frame $S^t$, and set 
$\dot \omega:=\frac{d\omega^t}{dt}|_{t=0}$. 
Applying Lemma \ref{varcs} and interchanging the integral on $M$ with the $t$-differential, we find 
\[-16\pi^2 \dCS=2 \int_M\tr(\dot \omega\wedge \Omega).\]
Let $X\in \cV(M)$ be a vector field on $M$, extended on $\cZ$ to be constant in the $t$ direction, in other words
$L_TX=0$ or equivalently $[T,X]=0$ where we denote $T:=\frac{\partial}{\partial t}$. Then
\begin{align*}
\dot\omega_{ij}(X)={}&\partial_t g^t(\nabla^t_X S^t_j,S^t_i)= \partial_t g^\cZ(\nabla^\cZ_X S^t_j,S^t_i)
=\langle \nabla^\cZ_T\nabla^\cZ_X S^t_j,S^t_i\rangle = \langle R^\cZ_{TX} S^t_j, S^t_i\rangle.
\end{align*}
We used above the fact that $\nabla^\cZ_T S^t_j=0$ (by construction of $S^t_j$) and the commutation of 
$T$ and $X$. From the symmetries of the Riemannian curvature, we get 
\[\dot\omega_{ij} = R^\cZ_{S_j, S_i} T\]
(we identify vectors and $1$-forms using the metric $g$). Let $W$ be 
the Weingarten operator of $\{0\}\times M\hookrightarrow \cZ$. By the Codazzi-Mainardi equation 
\cite[1.72d]{besse}, 
\[R^\cZ_{S_j, S_i} T = d^\nabla W (S_j, S_i)\]
or equivalently $R^\cZ T=d^\nabla W$ as vector-valued $2$-forms.
The operator $W$, essentially the second fundamental form, can be computed 
in terms of the first variation $h$ of $g^t$, namely
\[ W= g^{-1}\II = \tfrac12 g^{-1}\dg.
 \]
It follows that $\dot\omega = \tfrac12 d^\nabla \dg$, where $d^\nabla \dg$ is viewed as a $\sot$-valued
$1$-form using the basis $S$. Since the trace is independent of the basis,
\[-16\pi^2 \dCS= \int_M \tr(d^\nabla \dg \wedge R)\]
where $R$ is the Riemannian curvature tensor viewed as a section in
$\Lambda^2(M)\otimes \mathrm{End}(TM)$, and $d^\nabla \dg$ is viewed as a section in
$\mathrm{End}(TM)\otimes \Lambda^1(M)$. Using  the symmetry of $R$,
this becomes
\[-2\int_M \langle d^\nabla \dg, *_{34} R\rangle d\vol_g,\]
where the Hodge star operator $*_{34}$ acts on the last two positions in $R$. 
By definition of the adjoint operator, this equals
\[-2\langle \dg, {d^\nabla}^* *_{34} R\rangle_{L^2}.\]
Now the adjoint of $d^\nabla$ on $2$-forms with values in $T^*M$ is just $-*_{12} d^\nabla *_{12}$, thus 
\[-16\pi^2 \dCS=2\langle \dg, *_{12} {d^\nabla} *_{12}*_{34} R\rangle_{L^2}.\] 
The $2$-tensor $*_{12}*_{34} R$ is easily computed:
\[*_{12}*_{34} R = \Ric-\tfrac{\scal}{2}g=:Q=\Sch-\tfrac{\scal}{4}g.\]
We remark that $h$ is symmetric but ${d^\nabla}^* *_{34} R$ is not necessarily so. 
Of course, the skew-symmetric component
of ${d^\nabla}^* *_{34} R$ will not contribute towards $\dot{\CS}$ since $h$ is symmetric. Thus, 
\[-8\pi^2 \dCS=\langle \dg, (*_{12} {d^\nabla} *_{12}*_{34} R)_{\sym}\rangle_{L^2}
= \langle \dg, (*_{12} {d^\nabla}Q)_{\sym}\rangle_{L^2}. \]
We now claim that $(*_{12} {d^\nabla}Q)_{\sym}=*_{12} {d^\nabla}\Sch=\Cot$. 
This means two things: 
\begin{itemize}
\item $*_{12} {d^\nabla}\Sch$ is symmetric;
\item $*_{12} {d^\nabla} (\scal\cdot g)$ is skew-symmetric.
\end{itemize}
For any function $f$, $*_{12} {d^\nabla} (f g)= * df$ is a $2$-form, so the second fact 
is evident. As for the first, we use the identity \cite[1.94]{besse}
\[ \tr_{13}d^\nabla \Sch={d^\nabla}^*(\Ric-\tfrac{\scal}{2}g)=0,\]
where $\tr_{13}$ denotes trace with respect to $g$ on positions $1,3$. 
Take $(i,j,k)$ to be a cyclic permutation of $(1,2,3)$. Then 
\[0=(\tr_{13}d^\nabla \Sch)(S_k)= d^\nabla \Sch(S_i,S_k,S_i)+d^\nabla \Sch(S_j,S_k,S_j),\]
which is equivalent (by immediate algebraic considerations) to the desired symmetry
\[
*_{12} {d^\nabla}\Sch(S_i,S_j) = *_{12} {d^\nabla}\Sch(S_j,S_i).
\]
\end{proof}

\section{Properties of the Cotton tensor}

Let $(M^\circ,g^\circ)$ be a Riemannian $3$-manifold, not necessarily compact.
Then every point $x\in M^\circ$ has a neighborhood which can be isometrically embedded
in some compact manifold $(M,g)$. We will prove below some local properties of the Cotton tensor 
of $(M,g)$, which are therefore shared by $\Cot(M^\circ,g^\circ)$.

\begin{proposition}\label{cotsym}
The Cotton tensor $*{d^\nabla}\Sch$ is symmetric.
\end{proposition}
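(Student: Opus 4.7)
The argument is already latent in the proof of Theorem~\ref{varcsco}; my plan is to lift it out and present it as a direct consequence of the second Bianchi identity.

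\textbf{Step 1: Reduction to a contracted identity.} Starting from \eqref{Pdivf}, which asserts $(d^\nabla)^*(\Ric - \tfrac{\scal}{2}g) = 0$, and using $\Sch = \Ric - \tfrac{\scal}{4}g$ together with the dimension-three relation $\tr(\Sch) = \tfrac{\scal}{4}$, a short computation (comparing the divergence $\mathrm{div}\,\Sch$ with $d\tr\Sch$) gives the equivalent identity
\[
\tr_{13}\,d^\nabla \Sch = 0,
\]
where $\tr_{13}$ denotes the $g$-contraction on the first and third slots of the $(0,3)$-tensor $d^\nabla \Sch$, antisymmetric in its first two slots. This is the formula \cite[1.94]{besse} invoked inside the proof of Theorem~\ref{varcsco}.

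\textbf{Step 2: Algebraic translation via the Hodge star.} In an oriented orthonormal frame $(S_1, S_2, S_3)$, the defining formulas \eqref{hoop} for $*$ give
\[
(*_{12}\,d^\nabla \Sch)(S_k, S_l) = d^\nabla \Sch(S_i, S_j, S_l)
\]
whenever $(i,j,k)$ is an even permutation of $(1,2,3)$. The desired symmetry $(*_{12}\,d^\nabla \Sch)(S_k, S_l) = (*_{12}\,d^\nabla \Sch)(S_l, S_k)$ is automatic when $k = l$, and for $k \ne l$ the antisymmetry in the first two slots reduces it to a relation of the form
\[
d^\nabla \Sch(S_a, S_b, S_a) + d^\nabla \Sch(S_c, S_b, S_c) = 0, \qquad \{a,b,c\} = \{1,2,3\}.
\]
But this is precisely $(\tr_{13}\,d^\nabla \Sch)(S_b) = 0$ once the $a = b$ term is dropped by antisymmetry. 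The three off-diagonal symmetry conditions thus match the three scalar consequences of Step~1, one per choice of $b$.

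\textbf{Expected obstacle.} The only delicate point is this final matching of indices: one must track the orientation conventions in \eqref{hoop} carefully to verify that the three scalar identities produced by $\tr_{13}\,d^\nabla \Sch = 0$ line up correctly with the three non-trivial symmetry statements $C_{kl} = C_{lk}$ for $k \ne l$. No new geometric ingredient is required beyond the Bianchi identity; conceptually, the proposition says that in dimension three the (once-)contracted Bianchi identity accounts exactly for the skew-symmetric part of $*_{12}\,d^\nabla \Sch$, leaving the Cotton tensor symmetric.
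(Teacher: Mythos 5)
Your proposal is correct and is essentially the paper's own argument: the paper proves this proposition by pointing back to the last part of the proof of Theorem~\ref{varcsco}, where the identity $\tr_{13}d^\nabla\Sch={d^\nabla}^*(\Ric-\tfrac{\scal}{2}g)=0$ from \cite[1.94]{besse} is translated, exactly as in your Step~2, into the off-diagonal symmetry of $*_{12}d^\nabla\Sch$ in an oriented orthonormal frame. The index matching you flag as the only delicate point is precisely the displayed relation $0=(\tr_{13}d^\nabla\Sch)(S_k)=d^\nabla\Sch(S_i,S_k,S_i)+d^\nabla\Sch(S_j,S_k,S_j)$ for $(i,j,k)$ cyclic, so there is no gap.
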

\begin{proof}
This was shown in the last part of the proof of Theorem \ref{varcsco}.
\end{proof}

\begin{proposition}
The Cotton tensor is trace-free.
\end{proposition}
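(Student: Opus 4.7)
The plan is to recast $\tr_g \Cot$ as the contraction of a tensor symmetric in two slots against a tensor antisymmetric in the same slots, so that it vanishes for elementary algebraic reasons.

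First, I would fix an oriented orthonormal frame $(S_1, S_2, S_3)$ on $M$ and unpack the Hodge star convention \eqref{hoop}. Setting
\[\cott(X, Y; Z) := \langle (d^\nabla \Sch)(X, Y), Z\rangle = (\nabla_X \Sch)(Y, Z) - (\nabla_Y \Sch)(X, Z),\]
the formulas in \eqref{hoop} identify
\[\Cot(S_a, S_b) = \cott(S_c, S_d; S_b)\]
whenever $(a, c, d)$ is an even permutation of $(1, 2, 3)$. In Levi--Civita-symbol notation this reads
\[\Cot(S_a, S_b) = \epsilon_{acd}\,(\nabla_{S_c}\Sch)(S_d, S_b),\]
after absorbing the antisymmetrisation of $\cott$ in $(c,d)$ into $\epsilon_{acd}$.

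Second, taking the metric trace by setting $b = a$ and summing yields
\[\tr_g \Cot = \sum_{a,c,d}\epsilon_{acd}\,(\nabla_{S_c}\Sch)(S_d, S_a).\]
Because $\Sch$ is a symmetric bilinear form, and the covariant derivative preserves that symmetry, the factor $(\nabla_{S_c}\Sch)(S_d, S_a)$ is symmetric in the pair $(a, d)$, whereas $\epsilon_{acd}$ is antisymmetric in the same pair. Hence the contraction is identically zero, and the proposition follows.

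The argument is purely algebraic, so there is no serious conceptual obstacle. The only step that requires a modicum of care is the translation from \eqref{hoop} to the $\epsilon$-contraction formula for $\Cot(S_a, S_b)$; this is a direct check on the basis two-forms $S^i \wedge S^j$. One could equivalently avoid indices by summing the three identities $\Cot(S_a, S_a) = \cott(S_c, S_d; S_a)$ over the even permutations $(a, c, d)$ of $(1, 2, 3)$ and cancelling the resulting six terms pairwise via the symmetry of $\Sch$.
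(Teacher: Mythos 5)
Your proof is correct, but it takes a genuinely different route from the paper. You compute directly from the definition: writing $\tr_g\Cot=\sum_{a,c,d}\epsilon_{acd}\,(\nabla_{S_c}\Sch)(S_d,S_a)$ and observing that the summand is symmetric in $(a,d)$ while $\epsilon_{acd}$ is antisymmetric there. The translation from \eqref{hoop} to the $\epsilon$-contraction is consistent with the paper's conventions, and the covariant derivative of a symmetric $2$-tensor is indeed symmetric, so the cancellation is legitimate. Note that your argument never uses any property of $\Sch$ beyond its symmetry: it shows that $*\,d^\nabla T$ is trace-free for \emph{every} symmetric $2$-tensor $T$, it is purely local and pointwise, and it needs neither compactness nor the global frame. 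The paper instead deduces trace-freeness as the infinitesimal shadow of the conformal invariance of $\CS$: differentiating $\CS(M,e^{2tf}g)$ at $t=0$ and invoking Theorem \ref{varcsco} gives $\int_M f\,\tr(\Cot)\,d\vol_g=0$ for all $f$, hence $\tr(\Cot)=0$. That argument requires the compact setting (or the embedding-into-a-compact-manifold device set up at the start of the section) and the full variational machinery, but it explains \emph{why} the Cotton tensor is trace-free --- it is the gradient of a conformally invariant functional --- which is precisely the conceptual payoff the paper is advertising; indeed the paper explicitly acknowledges that all four propositions in that section ``can of course be proved directly from the definitions, by local computations,'' which is exactly what you have done. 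Your version is the more elementary and more general one; the paper's is the more structural one.
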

\begin{proof}
Let $f\in C^\infty(M,\bR)$ be arbitrary, and set $g^t:=e^{2tf}g$. Its first-order variation at $t=0$
is given by $2f g$.
By conformal invariance, $\CS(M,g^t)$ is constant in time. On the other hand,
by Theorem \ref{varcsco}, we get
\[0= 16\pi^2 \frac{d\CS(M,g^t)}{dt}|_{t=0} = \langle 2f g,\Cot(g)\rangle=2\int_M f\tr(\Cot(g))d\vol_g.\]
This means that $\tr(\Cot(g))$ is $L^2$-orthogonal on every smooth function on $M$, hence it must vanish identically.
\end{proof}

\begin{proposition}
The Cotton tensor is divergence-free, i.e., ${d^\nabla}^*\Cot(g)=0$.
\end{proposition}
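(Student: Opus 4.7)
The plan is to combine the diffeomorphism invariance of $\CS$ with the first-variation formula of Theorem \ref{varcsco}. Because the Chern-Simons integral is natural under pull-back (any orientation-preserving diffeomorphism $\phi$ sends an orthonormal frame $S$ for $g$ to an orthonormal frame $\phi^*S$ for $\phi^*g$, and the connection $1$-form transforms accordingly), the class $\CS(M,g)\in\bR/\bZ$ depends only on the oriented isometry class of $(M,g)$. Applying this to the flow $\phi_t$ of an arbitrary smooth vector field $X$ on a closed oriented $M$, the family $g^t:=\phi_t^*g$ satisfies $\CS(M,g^t)\equiv\CS(M,g)\pmod{\bZ}$; since the integer-valued difference varies continuously in $t$ and vanishes at $t=0$, it is identically zero as a real number, and consequently $\dCS=0$ for this variation.

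Next I identify the first-order variation $\dg=L_Xg=2\sym(\nabla X^\flat)$, where $X^\flat$ is the $g$-dual of $X$. By Proposition \ref{cotsym}, $\Cot(g)$ is symmetric, so its $L^2$-pairing against $\sym(\nabla X^\flat)$ coincides with its pairing against $\nabla X^\flat=d^\nabla X^\flat$. Substituting into Theorem \ref{varcsco} and integrating by parts yields
\[
0=\dCS=-\tfrac{1}{8\pi^2}\langle L_Xg,\Cot(g)\rangle_{L^2}=-\tfrac{1}{4\pi^2}\langle X^\flat,(d^\nabla)^*\Cot(g)\rangle_{L^2}.
\]
As $X$ ranges over all smooth vector fields, this forces $(d^\nabla)^*\Cot(g)=0$ pointwise on every closed oriented $3$-manifold. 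The opening remark of the present section then promotes the identity to arbitrary oriented $(M^\circ,g^\circ)$ via isometric embeddings of neighborhoods into closed manifolds, using that $\Cot$ and $(d^\nabla)^*$ are local operators.

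The only subtle point is inferring vanishing of the real derivative $\dCS$ from the $\bR/\bZ$-invariance of $\CS$ under diffeomorphisms; this is a routine continuity argument along the smooth curve $t\mapsto\phi_t^*g$ and poses no real obstacle. A more computational alternative would be to expand $(d^\nabla)^*\Cot$ in an orthonormal frame, obtaining a sum of commutators of covariant derivatives applied to $\Sch$, and to simplify using the second Bianchi identity \eqref{dnc} together with its contraction \eqref{Pdivf}. That direct approach works but obscures the conceptual reason: $\Cot$ is divergence-free precisely because $\CS$ is a well-defined function on the space of metrics alone, independent of auxiliary choices.
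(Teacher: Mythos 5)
Your proof is correct and follows essentially the same route as the paper: pull back $g$ by the flow of an arbitrary vector field $X$, use the isometry invariance of $\CS$ together with Theorem \ref{varcsco} to get $\langle L_Xg,\Cot(g)\rangle_{L^2}=0$, then use the symmetry of $\Cot$ and integration by parts to conclude that ${d^\nabla}^*\Cot$ is $L^2$-orthogonal to all vector fields. Your added remarks on the continuity argument for $\dCS=0$ and on localizing to non-compact manifolds are sound refinements of points the paper treats as evident, not a different method.
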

\begin{proof}
Let $X\in\cV(M)$ be a vector field, and $\phi_t$ the $1$-parameter group of diffeomorphisms obtained by integrating $X$ on $M$.
Set $g^t:=\phi_t^*g$, so in particular $\dot{g}=L_Xg$. Since $g$ and $g^t$ are isometric, it is rather evident that
$\CS(M,g^t)=\CS(M,g)$. By Theorem \ref{varcsco} we deduce that $\langle L_X g, \Cot(g)\rangle_{L^2}=0$.
But $L_Xg =\tfrac12 (d^\nabla X)_\sym$, so by Proposition \ref{cotsym} and the definition of the adjoint,
$\langle L_X g, \Cot(g)\rangle= \tfrac12 \langle X, {d^\nabla}^*\Cot\rangle$.
Hence ${d^\nabla}^*\Cot$ is $L^2$-orthogonal to every vector field $X$, so it must vanish identically.
\end{proof}

\begin{proposition}\label{confcotin}
The Cotton tensor is conformally covariant, in the sense that for every $f\in C^\infty(M)$, we have
\[\Cot(e^{2f}g)=e^{-f}\Cot(g).\]
The Cotton form $\cott=d^\nabla \Sch$ is conformally invariant:
\[\cott(e^{2f}g)=\cott(g).\]
\end{proposition}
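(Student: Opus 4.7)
The plan is to deduce the conformal covariance of $\Cot$ from the conformal invariance of the Chern-Simons invariant, established in the previous section, combined with the variational formula of Theorem \ref{varcsco}; the invariance of the Cotton form $\cott$ then follows by tracking how the Hodge star rescales.

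For the first assertion, I fix an arbitrary symmetric $(0,2)$-tensor $\dg$ and consider the two one-parameter families $g^t := g+t\dg$ and $\tilde g^t := e^{2f}g^t$. Their first-order variations at $t=0$ are $\dg$ and $e^{2f}\dg$ respectively, and since $\tilde g^t$ and $g^t$ are conformally related for every $t$, the preceding theorem gives $\CS(M,\tilde g^t)\equiv \CS(M,g^t)\pmod{\bZ}$; continuity in $t$ forces the difference to be locally constant and hence the $t$-derivatives at $t=0$ to agree. Applying Theorem \ref{varcsco} to both families yields
\[
\langle \dg, \Cot(g)\rangle_{L^2(g)} = \langle e^{2f}\dg, \Cot(e^{2f}g)\rangle_{L^2(e^{2f}g)}.
\]
Rewriting the right-hand side in terms of $g$ introduces three scaling factors: $e^{2f}$ from the rescaled variation, $e^{-4f}$ from contracting two $(0,2)$-slots with the inverse of $e^{2f}g$, and $e^{3f}$ from the rescaled volume form; they combine to a net factor of $e^{f}$. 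Since both Cotton tensors are symmetric by Proposition \ref{cotsym} and $\dg$ ranges over all symmetric tensors, the resulting pointwise identity reads $e^{f}\Cot(e^{2f}g) = \Cot(g)$, i.e.\ $\Cot(e^{2f}g) = e^{-f}\Cot(g)$ as bilinear forms.

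For the Cotton form, I use the defining relation $\Cot = *\cott$ together with the conformal scaling $*_{e^{2f}g} = e^{-f}\,*_g$ of the Hodge star on $\Lambda^2 M$ in dimension three, which follows directly from $\alpha\wedge *\beta = \langle\alpha,\beta\rangle\,d\vol_g$ and the weights above. Converting the bilinear form identity just obtained into one between vector-valued $1$-forms requires raising the $TM$ index with the respective metrics, which introduces an extra $e^{-2f}$ and produces $\Cot(e^{2f}g) = e^{-3f}\Cot(g)$ in $\Lambda^1 M\otimes TM$. Combining this with $\Cot(e^{2f}g) = e^{-f}\,*_g \cott(e^{2f}g)$ forces $\cott(e^{2f}g) = e^{-2f}\cott(g)$ in $\Lambda^2 M\otimes TM$; lowering the $TM$ index with each metric to view $\cott$ as a $(0,3)$-tensor contributes the inverse factor $e^{2f}$, so the two scalings cancel and the lowered Cotton form is metric-independent. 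The principal obstacle is purely bookkeeping: one must be careful whether each identity lives in $\Lambda^k M\otimes TM$, in $\Lambda^k M\otimes T^*M$, or as a $(0,k+1)$-tensor, and attach the appropriate conformal weight at every step.
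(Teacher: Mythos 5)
Your proof is correct and follows essentially the same route as the paper: both derive the pointwise identity $e^{f}\Cot(e^{2f}g)=\Cot(g)$ by equating, via Theorem \ref{varcsco}, the $t$-derivatives of the conformally related families $g+t\dot g$ and $e^{2f}(g+t\dot g)$, and then deduce the invariance of $\cott$ from the conformal rescaling of the Hodge star. The only difference is that you spell out the weight bookkeeping for the second assertion, which the paper dismisses as ``obvious rescaling properties''; your factors ($e^{-f}$ for $*$ on $2$-forms, $e^{-2f}$ for raising, $e^{2f}$ for lowering the $TM$ index) are all correct.
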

\begin{proof}
Let $h$ be any symmetric $2$-tensor, and choose a family $g^t$ of metrics with $\dot{g}=h$, for instance 
$g^t=g+th$ for small $t$. By conformal invariance of the Chern-Simons invariant, 
$\CS(M, e^{2f}g^t)=\CS(M, g^t)$ so their first variation in $t=0$ must be equal.
Therefore by Theorem \ref{varcsco}, $\langle e^{2f} h,\Cot(e^{2f}g)\rangle_{L^2(e^{2f}g)}= \langle  h,\Cot(g)\rangle_{L^2(g)}$, or equivalently
\[\langle e^{f}h, \Cot(e^{2f}g)\rangle_{L^2(g)}= \langle h, \Cot(g)\rangle_{L^2(g)}.
\]
Since $h$ was arbitrary, we deduce that $e^f \Cot(e^{2f}g)=\Cot(g)$. Using the obvious rescaling properties of the Hodge star operator
under conformal transformations in dimension $3$ acting on $1$-forms, we deduce that the Cotton forms of $g$ and $e^{2f}g$ are equal.
\end{proof}

These four propositions can of course be proved directly from the definitions, by local computations. 
We hope nevertheless that the reader will admit the qualitative advantage of proving properties of the Cotton tensor 
via Chern-Simons invariants. The difficulty of the proof was hidden in the definition and properties 
of the latter, but in exchange we gain a superior insight for the properties of the former.

\section{Conformal immersions in $\bR^4$}

\begin{theorem}
Let $(M,g)$ be a closed oriented Riemannian $3$-manifold and assume there exists a conformal immersion
$\imath:M\to\bR^4$. Then the Chern-Simons invariant $\CS(M,g)$ vanishes.
\end{theorem}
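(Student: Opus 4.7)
The plan is to combine conformal invariance with a comparison between the Levi-Civita connection on $TM$ and the trivial flat connection on the ambient pulled-back bundle. By the conformal invariance theorem I may assume $g=\imath^*g_{\bR^4}$ is the induced metric. Since $M$ and $\bR^4$ are both oriented, the normal line bundle of $\imath$ is trivial, so I augment an oriented orthonormal tangent frame $(S_1,S_2,S_3)$ by a global unit normal $S_4$ to an orthonormal frame of $\imath^*T\bR^4=M\times\bR^4$, encoded as a smooth map $A\colon M\to\mathrm{SO}(4)$. In this frame the Euclidean flat connection has $\mathfrak{so}(4)$-valued connection form $\tilde\omega=A^{-1}dA$ of block shape
\[\tilde\omega=\begin{pmatrix}\omega&-\beta^T\\ \beta&0\end{pmatrix},\]
where $\omega$ is the Levi-Civita connection form of $g$ in the tangent frame and the row $\beta$ encodes the second fundamental form of $\imath$.

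The heart of the argument is the identity $\cs(\omega)=\cs(\tilde\omega)$ of $3$-forms on $M$, where $\cs(\omega)$ is identified with $\cs(\omega\oplus 0)$ via the block inclusion $\sot\subset\mathfrak{so}(4)$. I would prove this by linearly interpolating $\omega_s:=(\omega\oplus 0)+s\tilde\alpha$ with $\tilde\alpha:=\tilde\omega-(\omega\oplus 0)$, and applying Lemma \ref{varcs}. The crucial algebraic observation is that $\tilde\alpha$ is strictly off-diagonal in the $3+1$ block decomposition, while $\omega\oplus 0$ and $\Omega^{\omega\oplus 0}$ are block-diagonal; combined with $\Omega^{\tilde\omega}=0$, the interpolating curvature simplifies and every trace appearing in $\tfrac{d}{ds}\cs(\omega_s)$ is either the trace of an off-diagonal block matrix (automatically zero) or $\tr(\tilde\alpha^3)$, a cyclic product of three off-diagonal blocks in a $3+1$ split which cannot close up diagonally. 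All these vanish identically in $s$, forcing $\cs(\omega)=\cs(\tilde\omega)$ pointwise.

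Since $\tilde\omega$ is flat and pure gauge, the same gauge-transformation formula used to prove that $\CS$ is well-defined modulo $\bZ$ collapses $\cs(\tilde\omega)$ to $-\tfrac{1}{3}\tr((A^{-1}dA)^3)=-\tfrac{1}{3}A^*\tr(\omc^3)$, so
\[\CS(M,g)=\frac{1}{48\pi^2}\int_M A^*\tr(\omc^3).\]
The main obstacle will be to verify that this integral lies in $\bZ$, equivalently that $\tfrac{1}{48\pi^2}\tr(\omc^3)$ represents an integer class in $H^3(\mathrm{SO}(4);\bZ)$. Unlike for maps $\mathfrak{a}\colon M\to\Sot$ whose degree is immediately an integer (Lemma \ref{lemmc}), $H^3(\mathrm{SO}(4);\bZ)$ has rank two, so both periods must be checked: the subgroup $\Sot\hookrightarrow\mathrm{SO}(4)$ reduces one to Lemma \ref{lemmc}, while the image of an $\Sud$ factor of $\mathrm{Spin}(4)=\Sud\times\Sud\to\mathrm{SO}(4)$ in the four-dimensional real representation yields, by a computation analogous to Section \ref{exampl}, the same normalization $-48\pi^2$, completing the integrality.
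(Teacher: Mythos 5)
Your argument is correct, but it takes a genuinely different route from the paper's. The paper exploits the quaternionic identification $\bR^4\cong\bH$ to choose the special adapted frame $S_a=\imath_*^{-1}(e_aN(x))$, $e_a\in\{i,j,k\}$, for which the connection form of $M$ is the pull-back under the Gauss map $N:M\to S^3$ of the connection form of the round sphere in the analogous frame; hence $\cs(M,S)=N^*\cs(S^3,U)$ and $\CS(M,g)=\deg(N)\cdot\CS(S^3)\in\bZ$ by Corollary \ref{csst}. In your notation this amounts to observing that for that particular frame the map $A:M\to\mathrm{SO}(4)$ factors as $R\circ N$ through the right-multiplication embedding $q\mapsto R_q$ of $S^3$ into $\mathrm{SO}(4)$, so the integrality question collapses to the single period already computed in Section \ref{exampl}. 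Your version works with an arbitrary adapted frame and is the standard ``Chern--Simons form of a flat connection'' argument: the interpolation step is sound, provided you make explicit that the off-diagonal part of $\Omega^{\tilde\omega}=0$ is the Codazzi equation $d\tilde\alpha+(\omega\oplus 0)\wedge\tilde\alpha+\tilde\alpha\wedge(\omega\oplus 0)=0$, which is what kills the one term, $\tr\bigl(\tilde\alpha\wedge(d\tilde\alpha+(\omega\oplus0)\wedge\tilde\alpha+\tilde\alpha\wedge(\omega\oplus0))\bigr)$, that a pure block-parity count does not dispose of; and the two periods you propose are indeed both equal to $-48\pi^2$ (for the $\Sud$ factor one gets $\tr(L_iL_jL_k)=\tr(-\mathrm{Id}_{\bR^4})=-4$ in place of $\tr(IJK)=-8$, compensated by $\vol(S^3)=2\vol(\Sot)$). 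The one point you must not skip is that your two cycles actually generate $H_3(\mathrm{SO}(4);\bZ)$ and not merely a finite-index subgroup, since otherwise integrality of those two periods would prove nothing; this does hold, because evaluation on a unit vector exhibits $\mathrm{SO}(4)$ as a trivial principal $\Sot$-bundle over $S^3$ whose fibre is the block $\Sot$ and which is sectioned by the image of an $\Sud$ factor, so $H_3(\mathrm{SO}(4);\bZ)\cong\bZ^2$ is torsion-free with exactly these two classes as a basis. As for what each approach buys: the paper's proof is shorter and stays entirely within material already established (degree theory for maps to $S^3$ and the computed value $\CS(S^3)=0$), while yours is frame-independent, makes transparent the roles of the flat ambient connection and of the Gauss--Codazzi equations, and generalizes to conformal immersions into other parallelizable targets, at the price of the extra topological input on $H_3(\mathrm{SO}(4);\bZ)$.
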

\begin{proof}
Since the Chern-Simons invariant is conformally invariant, by replacing $g$ with $\imath^*g^{\bR^4}$
we can assume that $\imath$ is an isometric immersion.
Let $N:M\to S^3$ be the Gauss map of the immersion, i.e., $N(x)$ is the unit normal to $\imath_*(T_xM)$ chosen such that
if $(S_1,S_2,S_3)$ is an oriented frame in $T_xM$, then $(N(x),S_1,S_2,S_3)$ is positively oriented in $\bR^4$.
We identify $\bR^4$ with the quaternion algebra as in Section \ref{exampl}. For every point $\eta\in S^3$, consider the
orthonormal frame in $T_\eta S^3$
\begin{align*}
U_1=i\eta,&&U_2=j\eta,&&U_3=k\eta.
\end{align*}
Similarly, for every $x\in M$ consider the orthonormal frame in $T_xM$
\begin{align*}
S_1=\imath_*^{-1}(iN(x)),&&S_2=\imath_*^{-1}(jN(x)),&&S_3=\imath_*^{-1}(kN(x)).
\end{align*}
We claim that in these frames, the connection $1$-forms on $S^3$ and on $M$ are related by
\begin{equation}\label{tiom}
N^*\omega^{S^3}=\omega^M.
\end{equation}
This is a local statement so we can assume that $M$ is a hypersurface in $\bR^4$. 
Notice that for every $x\in M$, we have $S_i(x)=U_i(N(x))$, $i=1,2,3$.
Let $X\in T_xM$ 
be a vector tangent to a curve $\gamma$ in $M$ with $x=\gamma(0)$. Then using the definition
of the Levi-Civita connection for hypersurfaces, we get
\begin{align*}\langle \nabla^M_X S_1,S_2\rangle={}& \langle \pt S_1(\gamma(t)),S_2\rangle=
\langle i\pt N(\gamma(t)),jN_x\rangle,\\
\langle \nabla^{S^3}_{N_*X} U_1,U_2\rangle={}& \langle \pt U_1(N(\gamma(t))),U_2\rangle=
\langle i\pt N(\gamma(t)),jN_x\rangle
\end{align*}
so $\omega^M_{21}(X)=\omega^{S^3}_{21}(N_* X)$. The same argument for the other pairs of indices
end the proof of \eqref{tiom}. Then by definition, the Chern-Simons forms on $M$ and $S^3$
are related by 
\[N^*\cs(S^3,U)=\cs(M,S).\]
It follows that $\CS(M,g)=\deg(N)\CS(S^3)$ is an integer, by Corollary \ref{csst}
(where $\deg(N)\in \bZ$ is the topological degree of the Gauss map $N:M\to S^3$).
\end{proof}

In particular, it follows from this theorem and \eqref{cssot} that $\Sot$ cannot be conformally immersed in $\bR^4$, 
although it is locally isometric to $S^3\subset\bR^4$.

\section{Locally conformally flat metrics}

We end our excursion into Cotton territory by solving the lcf (locally conformally flat) problem in dimension $3$:
\begin{quote}
\emph{When does a Riemannian metric $g$ on a $3$-manifold $M$ admit, 
for every $x\in M$, a conformal factor $f\in C^\infty(V)$ defined on some neighborhood $V\ni x$ 
such that $e^{2f}g$ is flat?}
\end{quote}

From Proposition \ref{confcotin}, one necessary condition for a positive answer is the vanishing
of the Cotton tensor $\Cot=*d^\nabla \Sch$, 
where $\Sch$ is the Schouten tensor.


\begin{theorem} \cite{cotton}
A Riemannian metric $g$ on a $3$-manifold $M$ is locally conformally flat 
if and only if its Cotton tensor $\Cot(g)$ vanishes. 
\end{theorem}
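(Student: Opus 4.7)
The forward implication is immediate: by Proposition \ref{confcotin} the Cotton tensor is conformally covariant, and any flat metric has vanishing Cotton tensor. The content lies in the converse. In dimension $3$ the Weyl tensor is identically zero, so $R$ is algebraically determined by $\Sch$ and $g$ via a Kulkarni--Nomizu-type expression. Hence a $3$-dimensional metric is flat if and only if $\Sch=0$, and the problem reduces to producing, near each point of $M$, a function $f$ with $\Sch(e^{2f}g)=0$. The standard transformation law
$$\widetilde{\Sch} = \Sch - \nabla df + df\otimes df - \tfrac{1}{2}|df|_g^2\, g$$
shows that, on setting $\alpha := df$, this is equivalent to solving the overdetermined first-order system
$$(\star)\qquad \nabla \alpha = \Sch + \alpha\otimes\alpha - \tfrac{1}{2}|\alpha|_g^2\, g.$$
The right-hand side is symmetric, so any solution $\alpha$ is automatically closed; the Poincar\'e lemma then extracts the desired $f$ from $\alpha$.

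\emph{Frobenius and the integrability condition.} The right-hand side of $(\star)$ is a smooth function of $x$ and $\alpha_x$ alone, so $(\star)$ defines a rank-$3$ distribution on the total space of $T^*M$ whose integral submanifolds are graphs of solutions. By Frobenius, the existence of an integral manifold through any initial datum is equivalent to a pointwise involutivity condition, obtained by antisymmetrizing a second derivative of $(\star)$ and substituting the Ricci identity
$$(\nabla_X\nabla_Y - \nabla_Y\nabla_X - \nabla_{[X,Y]})\alpha(Z) = -\alpha(R(X,Y)Z).$$
A calculation, using the dimension-$3$ algebraic relation between $R$ and $\Sch$, shows that upon plugging $(\star)$ back in, the terms nonlinear in $\alpha$ cancel and the surviving condition is precisely $d^\nabla\Sch(X,Y) = 0$, that is $\cott=0$. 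Since $*:\Lambda^2M\to\Lambda^1M$ is an isomorphism in dimension $3$, this is equivalent to $\Cot=0$, which is our hypothesis. Frobenius then produces $\alpha$, Poincar\'e delivers $f$, and $e^{2f}g$ is flat.

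\emph{Main obstacle.} The computational heart of the argument is the verification that after antisymmetrization all terms involving $\alpha$ cancel, leaving exactly $d^\nabla\Sch$. This cancellation is dimension-specific, encoded in the vanishing of the Weyl tensor in dimension $3$. Modulo that calculation, the rest of the proof---Frobenius and Poincar\'e---is routine.
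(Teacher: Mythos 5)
Your proposal follows essentially the same route as the paper: the forward direction via conformal covariance of $\cott$, and the converse by reducing to the overdetermined system $(\star)$ for $\alpha=df$, checking that the Frobenius integrability condition collapses (via the dimension-$3$ identity \eqref{Rsch} expressing $R$ through $\Sch$) to exactly $\cott=0$, then applying Frobenius and the Poincar\'e lemma. The paper carries out the cancellation you describe explicitly and additionally replaces the appeal to Frobenius by a self-contained successive-ODE integration along coordinate axes, but these are matters of detail, not of approach.
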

The Cotton tensor and the Cotton form are obtained from one another through the Hodge star, so they vanish
simultaneously. In dimension $3$ the Schouten tensor determines completely the curvature tensor since
for every vectors $U,V,X$ we have (cf.\ \cite[1.119b]{besse})
\begin{equation}\label{Rsch}
R_{U,V}X= \langle X,V\rangle \Sch(U) + \langle \Sch(X),V\rangle U 
- \langle \Sch(X),U\rangle V-\langle U,X\rangle \Sch(V),
\end{equation}
or in other words 
\begin{equation}\label{exkn}
RX=-X\wedge \Sch -\Sch(X)\wedge g. 
\end{equation}
Moreover, $\Sch$ and $\Ric$ determine one another, since $\tr(\Sch)=\scal(g)/4$. Hence $g$ 
is conformally flat if and only if it is conformally Schouten-flat.
\begin{proof}
Take $g_1=e^{2f}g$ for some $f\in C^\infty(M)$. We know by Proposition \ref{confcotin}
that $\cott=\cott_1$. Thus, if $\cott_1=0$ if follows that $\cott=0$.

Conversely, recall from \cite[1.159]{besse} the formula for the conformal change of the Schouten tensor: 
if we set $X:=df$,
\begin{equation}\label{conftrsc}
\Sch-\Sch_1= \nabla X -X\otimes X+\tfrac{1}{2}|X|^2 g.
\end{equation}
Suppose that we can solve locally the equation
\begin{equation}\label{conftrsceq}
\Sch= \nabla X -X\otimes X+\tfrac{1}{2}|X|^2 g.
\end{equation}
with some vector field $X$ (identified to a $1$-form using $g$). Then the term $\nabla X$ 
is symmetric, hence $X$ must be closed. From the Poincar\'e lemma, $X$ is locally exact, 
thus there exists (locally) a function $f$ with $X=df$. By combining \eqref{conftrsc} and 
\eqref{conftrsceq}, the metric $g_1:=e^{2f}g$ will be Schouten-flat. Thus, in order to finish 
the proof it is enough to show that the equation \eqref{conftrsceq} is locally solvable
under the assumption that $\cott=0$.

Rewrite \eqref{conftrsceq} as the overdetermined system
\begin{equation}\label{ecd}
\nabla X= \Sch +X\otimes X-\tfrac{1}{2}|X|^2 g
\end{equation}
and apply the twisted exterior differential $d^\nabla$ in both sides. We get
\begin{equation}\label{ect}
RX= \cott +d^\nabla(X\otimes X-\tfrac{1}{2}|X|^2 g)
\end{equation}
where $R$ is the curvature tensor. Using \eqref{ecd}, the fact that $X$ is closed and $g$ is parallel, we compute
\begin{align*}
d^\nabla(X\otimes X-\tfrac{1}{2}|X|^2 g)={}& -X\wedge\nabla X -\langle \nabla X,X\rangle\wedge g\\
={}&-X\wedge \Sch +\tfrac{1}{2}|X|^2 X\wedge g\\
{}&-\Sch(X)\wedge g -|X|^2 X\wedge g +\tfrac{1}{2}|X|^2 X\wedge g
\end{align*}
which, substituting in \eqref{ect} and using the assumption $\cott=0$, reduces to the constraint
\[RX={}-X\wedge \Sch-\Sch(X)\wedge g\]
already noted above \eqref{exkn}. Thus the system \eqref{conftrsceq} is \emph{involutive}, 
so by the Frobenius theorem it is locally integrable. For completeness, let us prove this integrability by hand, 
without invoking the Frobenius theorem.

Choose local coordinates $x_1,x_2,x_3$ in $M$, and for $j=1,2,3$ denote by 
$\partial_j=\frac{\partial}{\partial x_j}$ the coordinate vector fields.
We fix $X(0)$, and extend $X$ along the axis $\{x_2=x_3=0\}$
in a neighborhood of the origin by solving the equation \eqref{ecd} in the direction of $\partial_1$:
\begin{equation}\label{A=0}
\nabla_{\partial_1} X= \Sch(\partial_1) +\langle X,\partial_1\rangle X-\tfrac{1}{2}|X|^2 \partial_1.\end{equation}
This is an ODE with smooth coefficients, hence the solution $X(t,0,0)$ exists for small time $t$ 
and is uniquely determined by the initial value $X(0)$. Now for every $t$, 
extend $X$ along the lines $\{x_1=t,x_3=0\}$ using the ODE obtained from \eqref{ecd} in the direction of $\partial_2$:
\begin{equation}\label{A=0'}
\nabla_{\partial_2} X= \Sch(\partial_2) +\langle X,\partial_2\rangle  X-\tfrac{1}{2}|X|^2 \partial_2.
\end{equation}
Again, the solution $X(t,s,0)$ exists for small time $s$, depends smoothly on the parameter $t$ and on the variable $s$,
and is uniquely determined by the values of $X$ in $(t,0,0)$. Finally, we extend $X$ along the lines $\{x_1=t,x_2=s \}$
using \eqref{ecd} in the direction of $\partial_3$. This defines a smooth vector field $X$ in a neighborhood of the 
origin, but we must still prove that \eqref{ecd} is satisfied. 

By construction we know \eqref{A=0} only at points of the form $(t,0,0)$, and \eqref{A=0'} only on the plane ${x_3=0}$. 
Let us prove that \eqref{A=0} holds on the plane $\{x_3=0\}$. For this, set 
\begin{align}
A:={}&\nabla_{\partial_1} X-(\Sch(\partial_1) 
+\langle X,\partial_1\rangle X-\tfrac{1}{2}|X|^2 \partial_1).\label{defA}
\end{align}
Since \eqref{A=0} is valid at $(t,0,0)$ we know that $A(t,0,0)=0$ for every $t$.

The fact that the system \eqref{conftrsceq} is involutive should imply that $A$ satisfies 
a linear system of ODE's in the direction of $x_2$. Explicitly we compute using \eqref{A=0'} repeatedly:
\begin{align*}
\nabla_{\partial_2}A ={}& \nabla_{\partial_2}\nabla_{\partial_1} X -\nabla_{\partial_2} \Sch(\partial_1)
-\partial_2\langle X,\partial_1\rangle X - \langle X,\partial_1\rangle \nabla_{\partial_2} X\\
{}&+ \langle \nabla_{\partial_2} X,X\rangle \partial_1 +\tfrac{1}{2}|X|^2 \nabla_{\partial_2} \partial_1.
\end{align*}
Now use $\nabla_{\partial_2}\nabla_{\partial_1} X= R_{\partial_2\partial_1}X
+\nabla_{\partial_1}\nabla_{\partial_2} X$ and $d^\nabla\Sch=\cott=0$. We get from \eqref{A=0'}
\begin{align*}
\nabla_{\partial_2}A ={}& R_{\partial_2\partial_1}X
+\nabla_{\partial_1}\left(\Sch(\partial_2) +\langle X,\partial_2\rangle  X-\tfrac{1}{2}|X|^2 \partial_2\right)
-\nabla_{\partial_2} \Sch(\partial_1)\\
{}&-\partial_2\langle X,\partial_1\rangle X - \langle X,\partial_1\rangle \nabla_{\partial_2} X
+ \langle \nabla_{\partial_2} X,X\rangle \partial_1 +\tfrac{1}{2}|X|^2 \nabla_{\partial_2} \partial_1\\
={}& R_{\partial_2\partial_1}X+dX(\partial_1,\partial_2)X +\langle X,\partial_2\rangle \nabla_{\partial_1} X
- \langle X,\partial_1\rangle \nabla_{\partial_2} X \\
{}&- \langle \nabla_{\partial_1} X,X\rangle \partial_2
+ \langle \nabla_{\partial_2} X,X\rangle \partial_1.
\end{align*}
Substitute $\nabla_{\partial_2} X$ and $\nabla_{\partial_1} X$ in the equation above, using \eqref{A=0'} and
\eqref{defA}:
\begin{align*}
\nabla_{\partial_2}A ={}& R_{\partial_2\partial_1}X +dX(\partial_1,\partial_2)X +\langle X,\partial_2\rangle A 
-\langle A,X \rangle \partial_2\\
{}& + \langle X,\partial_2\rangle \Sch(\partial_1)- \langle X,\partial_1\rangle \Sch(\partial_2)\\
{}& -\langle \Sch(\partial_1),X\rangle \partial_2+\langle \Sch(\partial_2),X\rangle \partial_1\\
={}&dX(\partial_1,\partial_2)X +\langle X,\partial_2\rangle A -\langle A,X \rangle \partial_2.
\end{align*}
where in the last equality we have used \eqref{Rsch}. From \eqref{defA}, \eqref{A=0'} and 
the symmetry of the Schouten tensor,
\[
dX(\partial_1,\partial_2)=\langle \nabla_{\partial_1}X,\partial_2\rangle - \langle \nabla_{\partial_2}X,\partial_1\rangle
=\langle A,\partial_2\rangle
\]
Hence we get
\begin{equation} \label{na2a}
\nabla_{\partial_2} A = \langle A,\partial_2\rangle +\langle X,\partial_2\rangle A -\langle A,X \rangle \partial_2
= L(A)
\end{equation}
where $L$ is an endomorphism of $TM$. Therefore the vector field $A$ is a solution of the linear ODE \eqref{na2a}
in the variable $x_2$, 
with zero initial values, hence it vanishes identically.

By precisely the same argument applied to the pairs of variables $\{x_1,x_3\}$ and $\{x_2,x_3\}$, we see that
\eqref{A=0} and \eqref{A=0'} continue to hold at every point $(t,s,x_3)$. This means that $X$ is a solution 
to \eqref{ecd} as claimed.
\end{proof}

From the proof, it appears that $X$ is closed, and uniquely determined by the initial value $X(0)$.
Therefore the conformal factor $f$, the primitive of $X$, is uniquely determined by four parameters,
its value and its differential at $0$. These $4$ degrees of freedom arise from the fact that locally on $\bR^3$, 
the Lie algebra of conformal Killing vector fields has dimension $10$, while the subalgebra 
of Killing vector fields has dimension $6$.

\section{Links with other invariants}

We survey below two beautiful mathematical objects related to the Chern-Simons invariant. 
This section is of course no longer self-contained.
\subsection{The eta invariant}

The Chern-Simons invariant is strongly related
to the \emph{eta invariant} of the odd signature operator on $M$. The eta invariant is
a real-valued invariant of closed oriented $3$-manifolds, initially introduced by Atiyah, Patodi and Singer 
\cite{apsii} as a correction
term in Hirzebrich's signature formula on a $4$-manifold with boundary. It can be defined for 
every elliptic self-adjoint differential operator $A$ acting on the sections of some vector bundle $E$, 
but its construction is non-elementary: one needs to understand the 
\emph{spectrum} of $A$, which is the discrete subset of $\bR$ of eigenvalues (with multiplicity) of $A$
viewed as a self-adjoint unbounded operator in $L^2(M,E)$. The non-zero part of the spectrum is denoted 
$\mathrm{Spec}(A)^*$.

The eta function, a meromorphic function in the variable $z\in\bC$, is defined for
$\Re(z)>3$ by the absolutely convergent series
\[
\eta(A;z)=\dim\ker(A)+\sum_{\lambda\in\mathrm{Spec}(A)^*} \sign(\lambda) |\lambda|^{-z}.\]
The eigenvalues of $A$ grow sufficiently fast to ensure absolute convergence in a half-plane, 
for instance if $A$ is of order $1$, then the series defining $\eta(A,z)$ is absolutely convergent for 
$\Re(z)>3$. The function thus obtained extends to $\bC$ with possible simple poles in $z\in\{2,-2,-4,-6,\ldots\}$, 
in particular $z=0$ is a regular point, and $\eta(A)$ is by definition that regular value.

When $A$ is the self-adjoint odd signature operator acting on $\Lambda^1(M)\oplus \Lambda^3(M)$, 
\[A:=*d-d*\]
(here $*$ is the Hodge star defined in \eqref{hoop}), the resulting 
eta invariant is denoted $\eta(M,g)$ to highlight its dependence on the metric.

\begin{theorem}[\cite{apsii}]\label{etacs}
Modulo $\bZ$, the following equality holds:
\[3\eta(M,g)\equiv 2\CS(M,g) \mod\bZ.\]
\end{theorem}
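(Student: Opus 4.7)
The plan is to deduce the congruence from the Atiyah-Patodi-Singer signature theorem for $4$-manifolds with boundary, which constitutes the sole (and deep) analytic input: it asserts precisely that $\eta(M,g)$ is the correction term making Hirzebruch's signature formula valid on a manifold with boundary. By Rohlin's theorem $M$ bounds some compact oriented $4$-manifold $W$; I endow $W$ with a Riemannian metric $g^W$ which restricts to a product $dt^2 + g$ on a collar of $\partial W = M$. The APS signature theorem then reads
\[
\signature(W) \;=\; \tfrac{1}{3}\int_W p_1(g^W) \;-\; \eta(M,g),
\]
where $p_1(g^W) = -\tfrac{1}{8\pi^2}\tr(\Omega^W \wedge \Omega^W)$ is the Pontryagin form of $\nabla^W$. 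Since $\signature(W) \in \bZ$, proving the theorem reduces to the congruence $\int_W p_1(g^W) \equiv 2\,\CS(M,g) \pmod{\bZ}$.

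To establish this I would invoke the Chern-Simons transgression from Lemma \ref{lem4}. The global orthonormal frame $S$ on $M$, together with $\partial_t$, forms an orthonormal $4$-frame $\tilde S$ of $TW$ on the collar; since the collar metric is a product, the $\mathrm{SO}(4)$-connection form of $\nabla^W$ in $\tilde S$ block-decomposes along the $\partial_t$-direction, so its restriction to $M$ coincides with the $\mathrm{SO}(3)$-connection form $\omega$ used to define $\CS(M,g)$. One then extends $\tilde S$ to a smooth orthonormal frame of $TW$ on the complement of a finite set $\{x_1, \ldots, x_N\}$ in the interior of $W$; any obstruction to a global extension lies in $H^4(W, M; \pi_3(\mathrm{SO}(4)))$ and can be concentrated at isolated points by general position. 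Lemma \ref{lem4} together with Stokes' theorem applied to $W$ with tiny balls $B_\rho(x_j)$ removed yields
\[
\int_W \tr(\Omega^W\wedge \Omega^W) \;=\; \int_M \cs(\omega) \;-\; \sum_{j=1}^N \lim_{\rho\to 0}\int_{\partial B_\rho(x_j)} \cs(\tilde\omega).
\]
Each boundary contribution, in the limit, reduces to $-\tfrac{1}{3}\int_{S^3}\mfa^*\tr(\omc^3)$ for some transition function $\mfa\colon S^3 \to \mathrm{SO}(4)$, and by the natural analogue of Lemma \ref{lemmc} (identifying $\tfrac{1}{48\pi^2}\tr(\omc^3)$ as an integral generator of $H^3(\mathrm{SO}(4);\bZ)$) these contributions are integer multiples of $16\pi^2$. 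Recalling the normalization $\CS(M,g) = -\tfrac{1}{16\pi^2}\int_M\cs(\omega)$ and dividing by $-8\pi^2$ then produces the desired congruence, and multiplying the APS formula through by $3$ finishes the argument.

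The main obstacle is of course the APS signature theorem itself, which rests on heat-kernel asymptotics and spectral boundary conditions and is the genuine deep input standing outside the self-contained scope of the survey. A secondary, purely topological hurdle is verifying the integrality of the singular boundary contributions at each $x_j$: this is an $\mathrm{SO}(4)$-analog of the gauge-transformation calculation that sets the normalization of $\CS$ in the first place, and of the quaternionic computation in Section \ref{exampl}, so no new ideas beyond those already present in the survey are needed for it.
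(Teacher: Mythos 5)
Your starting point coincides with the paper's (the APS signature formula applied to an oriented filling of $M$, reducing everything to the congruence $\int_W p_1(g^W)\equiv 2\CS(M,g)\bmod\bZ$), but from there you take a genuinely more direct route. The paper's sketch only uses the cylinder $M\times[0,1]$ -- where the frame obviously extends -- to conclude that $\tfrac32\eta(M,g)-\CS(M,g)$ is constant on the space of metrics modulo $\bZ$, and then cites \cite{apsii} for the value of that constant. You instead try to prove the congruence outright by extending the boundary frame over $W$ minus finitely many points and measuring the defect through the transgression of Lemma \ref{lem4}. Your accounting of the point contributions is correct: the contribution at $x_j$ is $\tfrac13\int_{S^3}\mfa_j^*\tr(\omc^3)$ for a clutching map $\mfa_j\colon S^3\to\mathrm{SO}(4)$, and since $\pi_3(\mathrm{SO}(4))\cong\bZ^2$ is generated by left and right quaternion multiplication, the quaternionic computation of Section \ref{exampl} (with $\tr(L_iL_jL_k)=-4$ in place of $\tr(IJK)=-8$) shows each generator contributes $\mp 16\pi^2$, so these defects indeed only shift $2\CS$ by integers. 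If completed, this argument is stronger and more self-contained than the paper's sketch.

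The genuine gap is in the sentence ``any obstruction to a global extension lies in $H^4(W,M;\pi_3(\mathrm{SO}(4)))$ and can be concentrated at isolated points.'' For an arbitrary oriented filling $W$ this is false: since $\pi_1(\mathrm{SO}(4))=\bZ/2$, there is a primary obstruction in $H^2(W,M;\bZ/2)$ to extending the boundary trivialization over the $2$-skeleton, and its image in $H^2(W;\bZ/2)$ is $w_2(TW)$. This class is Poincar\'e dual to a surface, not to points, so it cannot be removed by deleting a finite set; concretely, for $M=S^3$ and $W=\bC P^2\setminus \mathring{B}^4$ the tangent frame of $S^3$ does not extend over the complement of any finite subset of $W$. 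The fix is exactly the ingredient the paper invokes at the end of its sketch: choose $W$ to be a \emph{spin} filling whose spin structure restricts to the one on $M$ determined by the frame $S$, which is possible because the spin cobordism group vanishes in dimension $3$. With $w_2$ killed, $\pi_2(\mathrm{SO}(4))=0$ lets you cross the $3$-skeleton for free and only the top obstruction in $H^4(W,M;\bZ^2)$ survives, which can indeed be localized at points; the rest of your argument then goes through. (A cosmetic point: the existence of an oriented filling is usually credited to Thom or Rokhlin's $\Omega_3=0$, not to ``Rohlin's theorem'' in its standard sense about signatures of spin $4$-manifolds.)
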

The proof relies on the signature formula of 
Atiyah, Patodi and Singer \cite{apsi} on a oriented $4$-manifold $X$ with boundary $M$:
\[
\signature(X)= -\tfrac{1}{24\pi^2}\int_X\tr((R^X)^2) - \eta(M,g).
\]
Here $\signature(X)\in\bZ$ is the signature of the intersection form on the relative cohomology $H^2(X,M;\bR)$, 
defined as the difference of the dimensions of maximal subspaces in $H^2(X,M;\bR)$ along which
the intersection form is positive, respectively negative definite.
The metric $g^X$ on $X$ is of product type near the boundary, in the sense that $L_\nu g^X=0$ for 
$\nu$ the geodesic vector field with respect to $g^X$ orthogonal to $M$
(here $L_\nu$ denotes Lie derivative). Moreover, $g^X$ restricts to $g$ on $M$.

We only care about the right-hand side of the signature formula modulo integers:
\begin{equation}\label{cse1}
\eta(M,g)+ \tfrac{1}{24\pi^2}\int_X\tr((R^X)^2)\in \bZ.
\end{equation}
Recall that for every orthonormal frame $S$ on $M$, we have by definition 
\begin{equation}\label{cse2}
\CS(M,g)+\tfrac{1}{16\pi^2}\int_M\cs(M,g,S)\in \bZ.
\end{equation}
To give the idea of the proof of Theorem \ref{etacs}, suppose that we can find $X$ a compact oriented four-manifold
bounded by $M$ (this is always possible by a result of Thom \cite{thom} about the oriented cobordism ring). 
Extend $g$ to a metric $g^X$ 
on $X$, and suppose that $S$ completed with the inner unit normal vector field can be extended to a frame $S^X$ on $X$
(this is \emph{not} always possible, for topological reasons). 
Nevertheless, whenever these assumptions are fulfilled, write using Stokes' formula and Lemma
\ref{lem4}
\[\int_M\cs(M,g,S) = \int_X\tr((R^X)^2).\]
This equality holds for instance when $X$ is a cylinder, with diffeomorphic boundary components $(M,g)$, $(M',g')$
with opposite orientations. Keeping $g'$ fixed and using \eqref{cse1}, \eqref{cse2}, we see that 
$\frac{3}{2}\eta(M,g)- \CS(M,g)$ is constant (modulo $\bZ$) on the space of Riemannian metrics
on $M$. That constant is shown in \cite{apsii} to be either $0$ or $\frac{1}{2}$, according to the parity of 
$\signature(X)$ if we choose the filling manifold $X$ to be Spin 
(using the vanishing of the Spin cobordism group in dimension $3$).

\subsection{The Selberg zeta function}

Assume $(M,g)$ is hyperbolic, i.e., its sectional curvatures are constant and equal to $-1$. 
As above, $M$ is supposed to be compact and orientable. 
Then $M$ is isometric to a quotient $\Gamma\backslash\bH^3$, where $\bH^3$ is the hyperbolic $3$-space,
and $\Gamma\subset\Psldc$ is a discrete subgroup of oriented isometries 
(i.e., a Kleinian group) consisting only of loxodromic elements: 
every non-trivial element of $\Gamma$ is conjugated in $\Psldc$ to a matrix of the form
$\begin{bmatrix}
v_\gamma&0 \\ 0& v_\gamma^{-1}
 \end{bmatrix}$ 
with $|v_\gamma|<1$, where of course $v_\gamma$ and its inverse are the eigenvalues of the matrix 
$\gamma$. The complex number $q_\gamma:=v_\gamma^2$ is called the \emph{multiplier}
of $\gamma$, corresponding to the fact that the action of $\gamma$ on the Riemann sphere 
(the ideal boundary of $\bH^3$) is conjugated to the multiplication by $q_\gamma$.
 
Closed geodesics in $M$ are in one-to-one correspondence with (non-trivial) 
conjugacy classes in $\Gamma\simeq\pi_1(M)$. 
A geodesic $c_{\gamma}$ coresponding to a conjugacy class $[\gamma]$ determines 
thus the multiplier $q_\gamma=e^{-(l_\gamma+i\theta_\gamma)}$. Viewed geometrically, 
$l_\gamma$ is the length of $c_\gamma$, while $e^{i\theta_\gamma}$ is the holonomy
along $c_{[\gamma]}$. Both quantities are expressible in terms of the trace 
$\tr(\gamma)=v_\gamma+v_\gamma^{-1}\in\bC$.

The \emph{Selberg zeta function of odd type} was defined by Millson 
\cite{millson} as an infinite product over the set $\cP$ of
primitive conjugacy classes
of $\Gamma$ (an element in $\Gamma$ is said to be primitive if it is not a nontrivial power of another 
element). The definition is a particular case of a construction from
Selberg's foundational paper \cite{selberg}:
\[
\cZ_\Gamma(\lambda)=\prod_{[\gamma]\in \cP} 
\prod_{m,n=0}^\infty\ 
\frac{1- q_\gamma^{m}(\overline{q}_\gamma)^{n+1} e^{-\lambda l_\gamma}}
{1- q_\gamma^{m+1}(\overline{q}_\gamma)^n e^{-\lambda l_\gamma}}.
\]

The product is absolutely convergent in the half-plane $\{\Re(\lambda)>0\}$, 
and has a meromorphic extension 
to the whole complex plane. Like the Riemann zeta function, the Selberg zeta function 
displays a symmetry around $0$ (moreover, it is known to have zeros only on the imaginary axis). The central value 
$\cZ_\Gamma(0)$ is given heuristically by the divergent product
\[
\cZ_\Gamma(0)= \prod_{[\gamma]\in\cP}\prod_{n\geq 1} \frac{1-(\overline{q}_\gamma)^n}{1-q_\gamma^n}.
\]

\begin{theorem}[Millson]
The central value of the Selberg zeta function of odd type on a $3$-dimensional hyperbolic manifold 
$M=\Gamma\backslash \bH^3$ is related to the eta invariant by the identity
\[
\exp(i\pi\eta(M))=\cZ_\Gamma(0).
\]
 
\end{theorem}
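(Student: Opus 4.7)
The plan is to apply the Selberg trace formula to the heat kernel of the odd signature operator $A = *d - d*$ and to match the resulting expansion over closed geodesics with the logarithm of the Euler product defining $\cZ_\Gamma(0)$.

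First, I would express the eta invariant via the Mellin representation
\[\eta(A;s) = \tfrac{1}{\Gamma((s+1)/2)}\int_0^\infty t^{(s-1)/2}\,\tr\!\left(A\, e^{-tA^2}\right)dt,\]
so that $\eta(M,g) = \eta(A;0)$ after analytic continuation. The Selberg trace formula on $M = \Gamma\backslash\bH^3$ expands $\tr(A\, e^{-tA^2})$ as an identity contribution (an integral over $M$ of a local heat invariant on $\bH^3$) plus a sum of orbital integrals over nontrivial conjugacy classes $[\gamma]$ in $\Gamma$. Because $A$ is an odd-order self-adjoint operator on an odd-dimensional manifold, its pointwise heat kernel trace $\tr(A\, e^{-tA^2})(x,x)$ vanishes identically, so the identity contribution is zero.

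Next, I would compute the orbital integral attached to a loxodromic element using the realization $\bH^3 = \Psldc/\Sud$. Harish-Chandra's Plancherel formula, or equivalently a direct Fourier analysis of the heat kernel on forms in hyperbolic $3$-space, yields that each primitive class $[\gamma_0]\in\cP$ with multiplier $q_{\gamma_0} = e^{-(l_{\gamma_0}+i\theta_{\gamma_0})}$ and its powers $\gamma_0^k$ contribute a term of the form $l_{\gamma_0}\cdot \Phi_k(t;q_{\gamma_0})$, with $\Phi_k$ a Gaussian in $t$ multiplied by the holonomy phase of $A$ along the closed geodesic. Taking the Mellin transform at $s=0$ integrates the Gaussians explicitly, and the resulting triple sum indexed by $(k,m,n)\in\bZ_{>0}\times\bZ_{\geq 0}^2$ regroups into
\[i\pi\,\eta(M,g) \;\equiv\; \sum_{[\gamma_0]\in\cP}\sum_{n\geq 1}\log\frac{1-\overline{q}_{\gamma_0}^{\,n}}{1-q_{\gamma_0}^{\,n}} \;=\; \log \cZ_\Gamma(0) \pmod{2\pi i\,\bZ},\]
and exponentiating yields Millson's identity.

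The principal obstacle is the explicit evaluation of the orbital integral of $A\, e^{-tA^2}$ at a loxodromic element, together with the careful bookkeeping of the holonomy phases $e^{in\theta_\gamma}$ produced by the action of $\gamma$ on the normal bundle of its closed geodesic. This requires the representation theory of $\Psldc$, in particular the Plancherel measure on the unitary principal series and the character of the induced action on $1$-forms and $3$-forms; it is precisely there that the factor $i\pi$ on the left-hand side of the identity is generated. A subsidiary technical difficulty is justifying the interchange of the Mellin integral with the sum over conjugacy classes, which calls for the prime geodesic theorem to control the number of closed geodesics of bounded length, together with adequate small-$t$ and large-$t$ decay of each orbital integral.
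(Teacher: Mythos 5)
The paper does not prove this theorem: it appears in the final survey section, which the author explicitly flags as ``no longer self-contained,'' and the statement is simply quoted from Millson's paper. So there is no in-paper argument to compare against. Judged on its own terms, your strategy is the correct one and is essentially Millson's: apply the Selberg trace formula to $\tr(A e^{-tA^2})$ for the odd signature operator, observe that the local (identity) contribution vanishes because the pointwise eta density of an odd operator on an odd-dimensional manifold is zero, evaluate the orbital integrals at loxodromic elements, and resum over powers of primitive classes to recover the Euler product.

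That said, what you have written is a roadmap rather than a proof, and the places where you defer the work are exactly where the theorem lives. First, the orbital integral of $A e^{-tA^2}$ at a loxodromic element with multiplier $q_\gamma$ --- including the holonomy phases $e^{in\theta_\gamma}$ from the action on the normal bundle and the precise constant that produces the factor $i\pi$ --- is the entire analytic content; asserting that Harish-Chandra's Plancherel formula ``yields'' it is not a substitute for the computation. Second, the Euler product for $\cZ_\Gamma(\lambda)$ converges only in a half-plane and the central value $\cZ_\Gamma(0)$ is, as the paper itself notes, given only ``heuristically'' by a divergent product; your step of Mellin-transforming at $s=0$ and ``regrouping'' the triple sum into $\log\cZ_\Gamma(0)$ silently performs a meromorphic continuation that must be established (Millson does this by identifying $\cZ_\Gamma'/\cZ_\Gamma$ with a trace of a resolvent-type kernel and continuing that). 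Third, the paper's definition of $\eta(A;z)$ includes the term $\dim\ker(A)$, which your Mellin representation of the nonzero spectrum omits; since $\ker A$ consists of harmonic $1$- and $3$-forms, $\dim\ker(A)=b_1(M)+1$ need not vanish, and $\exp(i\pi\dim\ker A)=\pm1$ would alter the claimed identity, so this normalization must be reconciled explicitly. Until these three points are carried out, the argument is a plausible plan, not a proof.
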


\end{document}